\newtheorem{theorem}{Theorem}[section]
\newtheorem{lemma}[theorem]{Lemma}
\theoremstyle{definition}
\newtheorem{definition}[theorem]{Definition}
\newtheorem{corollary}[theorem]{Corollary}
\newtheorem{proposition}[theorem]{Proposition}
\newtheorem{example}[theorem]{Example}
\numberwithin{equation}{section}
\theoremstyle{remark}
\newtheorem{remark}[theorem]{Remark}
\numberwithin{equation}{section}
\newcommand{\C}{\mathcal{C}}
\newcommand{\F}{\mathcal{F}}
\newcommand{\nP}{\mathcal{P}}
\newcommand{\nQ}{\mathcal{Q}}
\newcommand{\nD}{\mathcal{D}}
\newcommand{\im}{\mathrm{im}}
\newcommand{\field}{{\bf k}}
\DeclareMathOperator{\rlk}{rlk}
\DeclareMathOperator{\ohom}{OHOM}
\DeclareMathOperator{\lk}{lk}
\title[]{Cellular Resolutions of Ideals Defined by Simplicial Homomorphisms}
\author{Benjamin Braun}
\address{Department of Mathematics\\
         University of Kentucky\\
         Lexington, KY 40506--0027}
\email{benjamin.braun@uky.edu}
\urladdr{http://www.ms.uky.edu/~braun/}
\author{Jonathan Browder}
\address{Department of Mathematics \\
	University of Washington  \\
	Box 354350 \\
	Seattle, WA 98195}
\email{browder@math.washington.edu}
\urladdr{http://www.math.washington.edu/~browder/}
\author{Steven Klee}
\address{Mathematical Sciences Building \\
	One Shields Ave. \\
	University of California \\
	Davis, CA 95616}
\email{klee@math.ucdavis.edu}
\urladdr{http://www.math.ucdavis.edu/~klee/}
\keywords{Simplicial Complex, Homomorphism Complex, Monomial Ideal, Cellular Resolution, Betti Numbers, Nonnesting partition}
\subjclass[2010]{Primary 13D02; Secondary 05E40, 05E45, 05A18, 55U15.}
\date{\today}
\thanks{Benjamin Braun was partially supported by NSF award DMS-0758321.  
Jonathan Browder was partially supported by NSF VIGRE award DMS-0354131. 
Steven Klee was partially supported by NSF VIGRE award DMS-0636297.
}
\begin{document}

\maketitle

\begin{abstract}
In this paper we introduce the class of ordered homomorphism ideals and prove that these ideals admit minimal cellular resolutions constructed as homomorphism complexes.
As a key ingredient of our work, we introduce the class of cointerval simplicial complexes and investigate their combinatorial and topological properties.
As a concrete illustration of these structural results, we introduce and study nonnesting monomial ideals, an interesting family of combinatorially defined ideals.
\end{abstract}



\section{Introduction}

The use of techniques from topological combinatorics in combinatorial commutative algebra has been growing over the past decade.
Recall that a monomial ideal $I$ in the polynomial ring $S:=\field[x_1,\ldots,x_n]$ may be analyzed homologically through a free resolution of $I$, where a free resolution of $I$ is an exact chain complex of free $S$-modules terminating in $S/I$.
The resolution is called minimal if the rank of each of the free modules is minimized over all free resolutions of $I$; it is known that these ranks are independent of the choice of minimal free resolution.
If the maps for the resolution can be recovered from the cellular chain complex of a labeled cell complex as discussed in Section~\ref{background}, then the resolution is called cellular.

Batzies and Welker \cite{BatziesWelker} applied discrete Morse theory to construct minimal cellular resolutions from non-minimal cellular resolutions.
This has led to the development of algebraic discrete Morse theory by J\"{o}llenbeck and Welker \cite{JollenbeckWelker} (and developed independently by Sk\"{o}ldberg \cite{Skoldberg}).
Dochtermann and Engstr\"{o}m \cite{DochtermannEngstromEdgeIdeals} used standard techniques from topological combinatorics to give streamlined proofs of various results regarding free resolutions of edge ideals of graphs.

In recent work of Corso and Nagel \cite{CorsoNagel1,CorsoNagel2} and Nagel and Reiner \cite{NagelReiner}, specific families of ideals are studied that admit minimal cellular resolutions whose underlying cell complexes are prodsimplicial, where a \emph{prodsimplicial complex} is a polyhedral complex whose cells are products of simplices.
A major source of prodsimplicial complexes are the graph homomorphism complexes introduced by Lov\'{a}sz as a topological tool for providing lower bounds on graph chromatic numbers, as described in the recent monograph by Kozlov \cite{KozlovBook}; the homomorphism complex construction can be defined in a general context and is a source of subtle and fascinating topological structures in combinatorics.
Dochtermann and Engstr\"{o}m \cite{DochtermannEngstromCellular} introduced the use of the homomorphism complex construction to study (hyper-)edge ideals of a class of (hyper-)graphs they called cointerval.
The resulting prodsimplicial resolutions were reformulations and extensions of constructions arising in the work of Corso, Nagel, and Reiner, particularly of the ``complexes of boxes'' resolutions \cite[Section 3.3]{NagelReiner}.

With the present work, we offer three contributions to this fruitful interaction of topological combinatorics and combinatorial commutative algebra.
\begin{enumerate}
\item We introduce ordered homomorphism ideals, a family of ideals that admit minimal prodsimplicial resolutions constructed as homomorphism complexes.
Our hope is that the explicit homomorphism-based approach developed here will provide a simplifying language and useful technical tools for other researchers investigating ideals admitting prodsimplicial resolutions.
\item We investigate combinatorial and topological properties of cointerval simplicial complexes, one of the two inputs required by the homomorphism approach.
While the cointerval complexes introduced here are related to the cointerval hypergraphs introduced by Dochtermann and Engstr\"{o}m, neither class subsumes the other.
\item We illustrate these techniques by introducing nonnesting monomial ideals, a family of combinatorially defined ideals that may be studied with homomorphism complexes.
\end{enumerate}

Our paper is organized as follows; any terminology used here is defined in the relevant section.
In Section~\ref{background} we provide basic definitions and establish notational conventions.  
We describe the homomorphism complex construction and introduce the ordered homomorphism complex between two simplicial complexes whose vertex sets are linearly ordered. 
We also introduce the ordered homomorphism ideal associated to two ordered simplicial complexes and explain how it is connected to the homomorphism complex construction.

In Section~\ref{resolutions} we define cointerval simplicial complexes and prove that ordered homomorphism complexes support minimal linear cellular resolutions of ordered homomorphism ideals when the target complex is cointerval.
We offer two proofs of Theorem~\ref{main}: one is an inductive proof based on the number of homomorphisms considered, while the other proof uses poset closure operators and is motivated by standard techniques in the theory of homomorphism complexes.

In Section~\ref{properties} we investigate combinatorial and topological properties of cointerval simplicial complexes.  
In particular, we prove that cointerval complexes are vertex decomposable.
We also remark that shifted complexes are cointerval and that matroid complexes are not cointerval in general.

In Section~\ref{families} we illustrate these techniques by introducing nonnesting monomial ideals, a family of combinatorially defined ideals that may be approached with homomorphism complexes.
These ideals are related to the theory of nonnesting partitions, a Catalan-enumerated structure.
We show that Betti numbers of such ideals are related to poset order ideals in a recently-studied poset whose elements are Dyck paths of fixed length.

\subsection{Acknowledgements}

We thank Alberto Corso, Anton Dochtermann, Alex Engstr\"{o}m, Uwe Nagel, Isabella Novik, and Vic Reiner for thoughtful conversations and suggestions.


\section{Background and Fundamental Constructions}\label{background}

In this section we review the basics of simplicial and polyhedral cell complexes and free resolutions of monomial ideals.
We also provide an introduction to homomorphism complexes and introduce ordered homomorphism ideals and cointerval complexes.
There are many excellent references for topological/enumerative combinatorics and combinatorial commutative algebra \cite{BjornerSurvey,KozlovBook,MillerSturmfels,StanleyVol1,StanleyVol2}.


\subsection{Simplicial and polyhedral cell complexes}

\begin{definition}
A \emph{simplicial complex} $H$ on vertex set $V(H) = [n]$ is a collection of subsets $\sigma \subset [n]$, called \emph{faces}, that is closed under inclusion, i.e., if $\sigma \in H$ and $\tau \subset \sigma$, then $\tau \in H$ as well. 
\end{definition}

We make no distinction throughout this work between an abstract simplicial complex and its geometric realization as a topological space.
The \emph{dimension} of a face $\sigma \in H$ is $\dim \sigma:= |\sigma|-1$, and the \emph{dimension} of $H$ is $\dim H:=\max\{\dim \sigma: \sigma \in H\}$.  
If $H$ is a simplicial complex and $\sigma$ is a face of $H$, the \emph{link} of $\sigma$ in $H$ is the simplicial complex 
\[\lk_{H}(\sigma):=\{\tau \in H: \sigma \cap \tau = \emptyset, \sigma \cup \tau \in H\} \, .\]
When $\sigma$ consists of a single vertex, we generally write $\lk_{H}(v)$ instead of $\lk_{H}(\{v\})$.
We will make repeated use of the following standard fact.

\begin{proposition} \label{link-property}
Let $H$ be a simplicial complex.  If $\sigma \cup \sigma'$ is a face of $H$, then $$\lk_H(\sigma \cup \sigma') = \lk_{\lk_H(\sigma)}(\sigma').$$
\end{proposition}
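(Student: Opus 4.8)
The identity is purely set-theoretic — it compares the membership conditions defining two collections of faces — so the plan is a direct double-inclusion argument, with the only delicate point being the bookkeeping of the intersection (disjointness) conditions. I would first record two preliminary observations: since $\sigma \cup \sigma'$ is a face of $H$ and $H$ is closed under inclusion, $\sigma$ is itself a face of $H$, so $\lk_H(\sigma)$ is defined; and $\sigma \cap \sigma' = \emptyset$, which is exactly the condition under which $\sigma'$ is a face of $\lk_H(\sigma)$ and hence the setting in which the right-hand side is being considered. (Some form of this disjointness really is needed: without it the right-hand side degenerates while the left-hand side need not.)

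Next I would unwind both sides. Expanding the right-hand side: $\tau \in \lk_{\lk_H(\sigma)}(\sigma')$ exactly when (i) $\tau \in \lk_H(\sigma)$, i.e.\ $\tau \in H$, $\tau \cap \sigma = \emptyset$, and $\tau \cup \sigma \in H$; (ii) $\tau \cap \sigma' = \emptyset$; and (iii) $\tau \cup \sigma' \in \lk_H(\sigma)$, i.e.\ $\tau \cup \sigma' \in H$, $(\tau \cup \sigma') \cap \sigma = \emptyset$, and $\tau \cup \sigma \cup \sigma' \in H$. The operative clause among these is the last one of (iii). Expanding the left-hand side: $\tau \in \lk_H(\sigma \cup \sigma')$ exactly when $\tau \in H$, $\tau \cap (\sigma \cup \sigma') = \emptyset$, and $\tau \cup \sigma \cup \sigma' \in H$.

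Then I would verify these two lists of conditions are equivalent. For one containment, suppose $\tau \in \lk_H(\sigma \cup \sigma')$; then $\tau \cup \sigma \cup \sigma' \in H$, so by downward closure $\tau$, $\tau \cup \sigma$, and $\tau \cup \sigma'$ all lie in $H$; the relation $\tau \cap (\sigma \cup \sigma') = \emptyset$ gives both $\tau \cap \sigma = \emptyset$ and $\tau \cap \sigma' = \emptyset$, and combined with the standing hypothesis $\sigma \cap \sigma' = \emptyset$ this supplies the intersection clauses in (i) and (iii); so all of (i)--(iii) hold. Conversely, from (i)--(iii): (i) gives $\tau \in H$ and $\tau \cap \sigma = \emptyset$, (ii) gives $\tau \cap \sigma' = \emptyset$ so that $\tau \cap (\sigma \cup \sigma') = \emptyset$, and the final clause of (iii) gives $\tau \cup \sigma \cup \sigma' \in H$; hence $\tau \in \lk_H(\sigma \cup \sigma')$. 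This establishes the claimed equality. There is no real obstacle beyond carefully tracking which $\cap$ and $\cup$ conditions are in force at each step; the point worth emphasizing — precisely because this identity is used repeatedly in what follows — is that the disjointness of $\sigma$ and $\sigma'$ genuinely enters in one of the two containments.
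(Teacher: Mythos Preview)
Your argument is correct. The paper itself does not supply a proof of this proposition: it is stated as a ``standard fact'' and left unproved, so there is nothing to compare your approach against. Your double-inclusion argument is the natural one, and you correctly isolate the one subtlety, namely that the disjointness $\sigma \cap \sigma' = \emptyset$ is an implicit hypothesis (needed so that $\sigma' \in \lk_H(\sigma)$ and so that the clause $(\tau \cup \sigma') \cap \sigma = \emptyset$ holds in the forward inclusion). This is consistent with how the paper invokes the proposition later: in Lemma~\ref{rlk-cointerval} and Lemma~\ref{links-cointerval} it is always applied with $\sigma$ and $\sigma'$ disjoint.
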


\begin{definition}
A \emph{polyhedral cell complex} $X$ is a finite collection of convex polytopes in a real vector space $V$, called \emph{faces} of $X$, satisfying the following two properties:
\begin{itemize}
\item If $\tau$ is a polytope in $X$ and $\eta$ is a face of $\tau$, then $\eta$ is a polytope in $X$.
\item If $\tau$ and $\sigma$ are both polytopes in $X$, then $\tau\cap \sigma$ is a face of both $\tau$ and $\sigma$.
\end{itemize}
\end{definition}

We often view a polyhedral cell complex $X$ as a cell complex in the sense of CW-complexes in topology.
Consider the chain complex $\C$ over a field $\field$ for $X$ where the $i$-th vector space in the complex is the vector space $C_i$ with basis $\{e_\tau\}$ indexed by the $i$-dimensional cells of $X$, denoted $X^{(i)}$.
The map $\partial_i$ in $\C$ is obtained by extending linearly the map 
\[\partial_ie_\tau = \sum_{\sigma\in X^{(i-1)}}[\tau:\sigma]e_{\sigma}\]
where $[\tau:\sigma]$ is the incidence number of $\tau$ and $\sigma$ in $X$.  
See Hatcher \cite[Chapter 2]{Hatcher} for more on computing cellular chain complexes, with the caveat that Hatcher denotes the incidence number by $d_{\tau,\sigma}$.
The \emph{homology groups} of $X$ (actually vector spaces in our context) are defined to be $H_i(X,\field):=\ker(\partial_{i-1})/\im(\partial_i)$.  
We say $X$ is \emph{acyclic over $\field$} if the homology groups for $X$ are all zero when the vector spaces $C_i$ are $\field$-vector spaces.
The chain complex and homology groups of $X$ will be useful when constructing cellular resolutions of ideals from $X$.


\subsection{Resolutions of monomial ideals}

Our presentation in this section follows the textbook of Miller and Sturmfels \cite{MillerSturmfels}.
A \emph{monomial ideal} in the ring $S:=\field[x_1,\ldots,x_n]$ is an ideal $I$ generated by monomials.
A \emph{(free) resolution} of a monomial ideal $I$ is an exact chain complex of free $S$-modules $\displaystyle F_i=\oplus_{j=1}^{\beta_i}S$ of the form
\[0\leftarrow S/I \leftarrow F_0 \leftarrow F_1 \leftarrow F_2 \leftarrow \cdots
\]
with maps $\phi_i:F_i\rightarrow F_{i-1}$.
We assume throughout that all our resolutions are graded, meaning the $\phi_i$ are degree-preserving; typically to make the resolution graded, the grading on the $S$-summands are shifted, but we will suppress this shift in our notation and write only $S$.
A resolution is \emph{minimal} if the rank $\beta_i$ of each $F_i$ is minimized over all possible $\beta_i$'s appearing in a resolution of $I$.
The $\beta_i$'s are called the \emph{Betti numbers} of $I$ and are a fundamental homological invariant with important applications in algebra, combinatorics, algebraic geometry, algebraic statistics, and other areas.
A resolution is \emph{cellular} if there is a cell complex $X$ whose cellular chain complex supports the resolution, in the sense that with an appropriate labeling of the faces of $X$ by monomials in $S$, our resolution can be recovered from the cellular chain complex of $X$ in the following way.

As we will only consider polyhedral cellular resolutions in the present work, we restrict our definitions to this case; for a quick introduction to the more general case of CW-supported resolutions, see \cite{BatziesWelker}.
In order for $X$ to support a resolution of $I$, each vertex of $X$ must correspond to a unique monomial generator $x^\alpha$ of $I$, which we consider as a label on the vertex.
To each face $\tau$ of $X$ we assign as a label the least common multiple of the labels on the vertices of $\tau$, which we denote by $x_\tau$.
We say that this labeling of $X$ supports a resolution of $I$ if $I$ is resolved by the chain complex of free $S$-modules whose $i$-th free module has summands corresponding to the elements of $X^{(i)}$ and whose $i$-th map is defined on generators of the summands by
\[\partial_ie_\tau = \sum_{\sigma\in X^{(i-1)}}[\tau:\sigma]\frac{x_{\tau}}{x_{\sigma}}e_{\sigma} \, ,\]
where here $e_{\sigma}$ indicates the generator of the summand corresponding to $\sigma$.
We denote this resolution of $I$, if it exists, by $\F(X)$.
Fortunately, there is a simple criterion that allows us to check when a complex $X$ with labeled vertices supports a resolution of the ideal generated by the vertex labels.

\begin{proposition}\label{acyclic}{\rm \cite[Proposition 4.5]{MillerSturmfels}} Given a polyhedral complex $X$ with vertices labeled by the generators of $I$, $\F(X)$ is a resolution of $S/I$ if and only if for every monomial $x^\alpha$ in $S$, the subcomplex $X_{\leq x^\alpha}$ is acyclic over $\field$, where $X_{\leq x^\alpha}$ denotes the subcomplex of $X$ consisting of faces with labels dividing $x^\alpha$.
\end{proposition}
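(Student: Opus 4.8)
The plan is to exploit the fine $\Z^n$-grading carried by $\F(X)$ and reduce exactness to a purely topological statement about the subcomplexes $X_{\leq x^\alpha}$, one multidegree at a time. First I would observe that $\F(X)$ is a complex of $\Z^n$-graded free $S$-modules once we place the generator $e_\tau$ in multidegree $\deg x_\tau$: the differentials are then degree preserving, since $\tfrac{x_\tau}{x_\sigma}e_\sigma$ also has multidegree $\deg x_\tau$. A complex of $\Z^n$-graded free modules is exact if and only if each of its multigraded strands $\F(X)_b$ (for $b \in \Z^n$) is an exact complex of $\field$-vector spaces, and likewise the statement ``$\F(X)$ resolves $S/I$'' may be checked strand by strand; here the last map sends the generator at a vertex $v$ to its label $x^{a_v}$, so its cokernel is $S/I$ and surjectivity onto $I$ is automatic from the hypothesis that the vertex labels generate $I$ --- the real content is the vanishing of the higher homology.

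The central step is to identify the strand $\F(X)_b$ explicitly. For a fixed $b \in \Z^n$, the degree-$b$ piece of the summand $S(-\deg x_\tau)$ is one-dimensional over $\field$, spanned by $x^{\,b-\deg x_\tau}e_\tau$, exactly when $x_\tau \mid x^b$ --- that is, exactly when $\tau$ is a face of $X_{\leq x^b}$ --- and is zero otherwise; so $\F(X)_b$ acquires a natural $\field$-basis indexed by the faces of $X_{\leq x^b}$. Under this identification the differential becomes purely combinatorial: since $\sigma \subseteq \tau$ forces $\deg x_\sigma \leq \deg x_\tau \leq b$, one computes
\[
\partial_i\bigl(x^{\,b-\deg x_\tau}e_\tau\bigr)
 = x^{\,b-\deg x_\tau}\sum_{\sigma}[\tau:\sigma]\,\frac{x_\tau}{x_\sigma}\,e_\sigma
 = \sum_{\sigma}[\tau:\sigma]\,x^{\,b-\deg x_\sigma}e_\sigma ,
\]
so $\partial_i$ is exactly the cellular boundary map of $X_{\leq x^b}$. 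Thus $\F(X)_b$ is identified with the augmented cellular chain complex of $X_{\leq x^b}$ over $\field$, the bottom summand playing the role of the empty face.

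Combining these two observations would finish the proof: $\F(X)_b$ is exact (with the correct cokernel in the bottom degree) if and only if $X_{\leq x^b}$ has vanishing reduced $\field$-homology, i.e.\ is acyclic over $\field$. When $b$ has a negative coordinate, or when $x^b \notin I$, the subcomplex $X_{\leq x^b}$ is void or consists of the empty face alone, and these degrees impose no condition --- matching the fact that the corresponding graded piece of $S/I$ is $0$ or $\field$ accordingly. Since $x^b$ runs over all monomials of $S$ as $b$ runs over $\N^n$, this yields exactly the asserted equivalence in both directions at once.

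I do not anticipate a serious obstacle: once the multigraded-strand viewpoint is in place, the argument is a routine unwinding of definitions. The one place where care is needed is the bookkeeping at the bottom of the complex --- matching the graded pieces of $S/I$ with the reduced $(-1)$st homology of $X_{\leq x^b}$, keeping ``acyclic'' in the reduced sense (equivalently, the $\field$-homology of a point when $X_{\leq x^\alpha}$ is nonempty), and treating the empty face as a $(-1)$-dimensional cell so that the algebraic and topological indexing conventions agree.
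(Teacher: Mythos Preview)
Your argument is correct and is essentially the standard proof: pass to the $\Z^n$-graded strands of $\F(X)$, identify the degree-$b$ strand with the augmented cellular chain complex of $X_{\leq x^b}$ over $\field$, and conclude that exactness of $\F(X)$ in positive homological degree is equivalent to the vanishing of reduced homology of every $X_{\leq x^b}$. Your care at the bottom of the complex (matching $(S/I)_b$ with $\widetilde{H}_{-1}$ and treating the empty face correctly) is exactly the bookkeeping needed.

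Note, however, that the paper does not supply its own proof of this proposition: it is quoted verbatim as \cite[Proposition~4.5]{MillerSturmfels} and used as a black box. The argument you have written is precisely the one given in Miller--Sturmfels, so there is nothing to compare --- your proposal simply fills in the omitted reference.
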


It is straightforward to check that $\F(X)$ is minimal if whenever $\sigma\subset\tau$ is a strict inclusion of faces, then $x_{\tau}\neq x_{\sigma}$.

\begin{remark}
Minimal cellular resolutions of monomial ideals were introduced by Bayer and Sturmfels \cite{BayerSturmfels} and have since been the subject of extensive investigation.
It is interesting that not all resolutions of a monomial ideal are cellular in nature.
Velasco \cite{Velasco} provides examples of such resolutions and discusses differences between resolutions that are supported on simplicial complexes, polyhedral cell complexes, and arbitrary cell complexes, respectively.
\end{remark}


\subsection{Homomorphism complexes and ordered homomorphism ideals}

The homomorphism complex construction was created to provide topological lower bounds on chromatic numbers of graphs.
The motivation for the general construction is the following.
Suppose that one has two finite sets $A$ and $B$, and a set of allowed maps between them called homomorphisms.
In many situations, the existence of a single homomorphism forces a relation to hold between certain invariants of our sets, for example: if $A$ and $B$ are graphs and there exists a graph homomorphism from $A$ to $B$, then $\chi(A)\leq \chi(B)$; if $A$ and $B$ are simplicial complexes and $\phi$ maps $A$ to $B$ non-degenerately, then $\dim(A)\leq \dim(B)$.

The homomorphism complex construction is an attempt to capture additional information about the relations on these invariants by considering the relationships between and among various families of homomorphisms in a topological setting.
Given a set $M$ of homomorphisms from $A$ to $B$, the cells of the homomorphism complex associated to $M$ correspond to all subsets of $M$ that are obtained by specifying a list of elements from $B$ for each element of $A$ and allowing homomorphisms to be built by independently selecting one element from each of these lists.
One can view the homomorphism complex as identifying (up to homotopy) families of homomorphisms that arise in this way.

\begin{definition}
Let $A$ and $B$ be finite sets.  
Let $M:=\{ \phi:A\rightarrow B \}$  be a collection of set maps called homomorphisms.  
Let $P(A,B)$ be the polyhedral cell complex $\prod_{i\in A}\Delta_B$, where $\Delta_B$ is the simplex with vertices labeled by elements of $B$.  
We record a face of $P(A,B)$ by an $|A|$-tuple of the form $X=(X_i)_{i\in A}$, with each $\emptyset \neq X_i\subseteq B$.
The faces are partially ordered by $X\leq Y$ if and only if $X_i\subseteq Y_i$ for all $i\in A$.  
The \emph{homomorphism complex} $\mathrm{HOM}_M(A,B)$ is the subcomplex of $P(A,B)$ consisting of all cells labeled by $(W_i)_{i\in A}\in P(A,B)$ such that if $\psi:A\rightarrow B$ satisfies $\psi(i)\in W_i$ for all $i\in A$ then $\psi\in M$.  
We call such a $(W_i)_{i\in A}\in P(A,B)$ a \textit{multi-homomorphism}, and we call $A$ the \textit{test object} and $B$ the \textit{target object} for the complex.
\end{definition}

The cells in $\mathrm{HOM}_M(A,B)$ are all products of simplices; such complexes are called \emph{prodsimplicial} and are special examples of polyhedral cell complexes.
We now introduce a new family of ideals defined by simplicial homomorphisms.

\begin{definition}\label{ordereddef}
Let $G$ and $H$ be simplicial complexes on finite totally ordered vertex sets $[n]$ and $[m]$, respectively.
Define an \emph{ordered simplicial homomorphism} from $G$ to $H$ to be a map $\phi:[n]\rightarrow [m]$ such that $\phi$ is weakly order preserving on $[n]$ and such that $\phi$ maps faces of $G$ to faces of $H$ non-degenerately, i.e. $\phi$ preserves face dimension.
We define the \emph{ordered homomorphism ideal} $I_{G,H,ord}$ to be the monomial ideal in $k[x_1,\ldots,x_m]$ generated by the monomials $\prod_{i\in[n]}x_{\phi(i)}$ where $\phi$ ranges over all ordered simplicial homomorphisms from $G$ to $H$.
\end{definition}

For a graph $G$ with any ordering of its vertices, the edge ideal of $G$ arises as $I_{K_2,G,ord}$.
The usefulness of the homomorphism complex construction for $I_{G,H,ord}$ is that the monomial generators correspond to homomorphisms between simplicial complexes, and the syzygies for these generators can in some cases be captured precisely by the topological interaction of these homomorphisms in a homomorphism complex.
To emphasize the role of the underlying ordering on the simplicial complexes for these ideals and their resolutions, we make the following definition.

\begin{definition}
Given $G$ and $H$ as in Definition~\ref{ordereddef}, denote by $\ohom(G,H)$ the homomorphism complex with test object $G$, target object $H$, and morphisms all ordered simplicial homomorphisms from $G$ to $H$.
\end{definition}

\begin{example}
Let $K$ be the simplicial complex on vertex set $\{a<b<c<d<e\}$ with facets $\{a,b,d\}$ and $\{c,d\}$.  
Let $L$ be the simplicial complex on vertex set $\{1<2<3<4<5\}$ with facets $\{1,2,4\},\{1,2,5\},\{3,4\},\{3,5\}$.
It follows that
\[I_{K,L,ord}=\left( x_1x_2^2x_4,x_1x_2x_3x_4,x_1x_2^2x_5,x_1x_2x_3x_5 \right).\]
The complex $\ohom(L,K)$ is a two-dimensional square.

Note that if we remove the two-dimensional face $\{1,2,5\}$ from $L$ to produce a complex $L'$, then the resulting ideal is
\[I_{K,L',ord}=\left( x_1x_2^2x_4,x_1x_2x_3x_4 \right) \, , \]
for which the complex $\ohom(K,L')$ is a line segment.
\end{example}

\begin{example}\label{bigexample}Let $G$ be the graph on vertex set $\{a<b<c\}$ with edge $\{a,c\}$.  Let $H$ be the graph on vertex set $\{1<2<3<4\}$ containing all edges except $\{3,4\}$. 
It follows that
\[I_{G,H,ord}=\left( x_1^2x_2,x_1^2x_3,x_1^2x_4,x_2^2x_3,x_1x_2^2,x_1x_2x_3,x_1x_2x_4,x_1x_3^2,x_1x_3x_4,x_1x_4^2,x_2x_3^2 \right) .\]
The complex $\ohom(G,H)$, shown in Figure~\ref{ex1hom}, has ten vertices and the following facets: two 3-dimensional prisms that share a common rectangle, one 2-dimensional square sharing an edge with one of the prisms, and a 3-dimensional simplex sharing a triangle with one of the prisms.
\end{example}

\begin{figure}[ht]
\begin{center}
\input{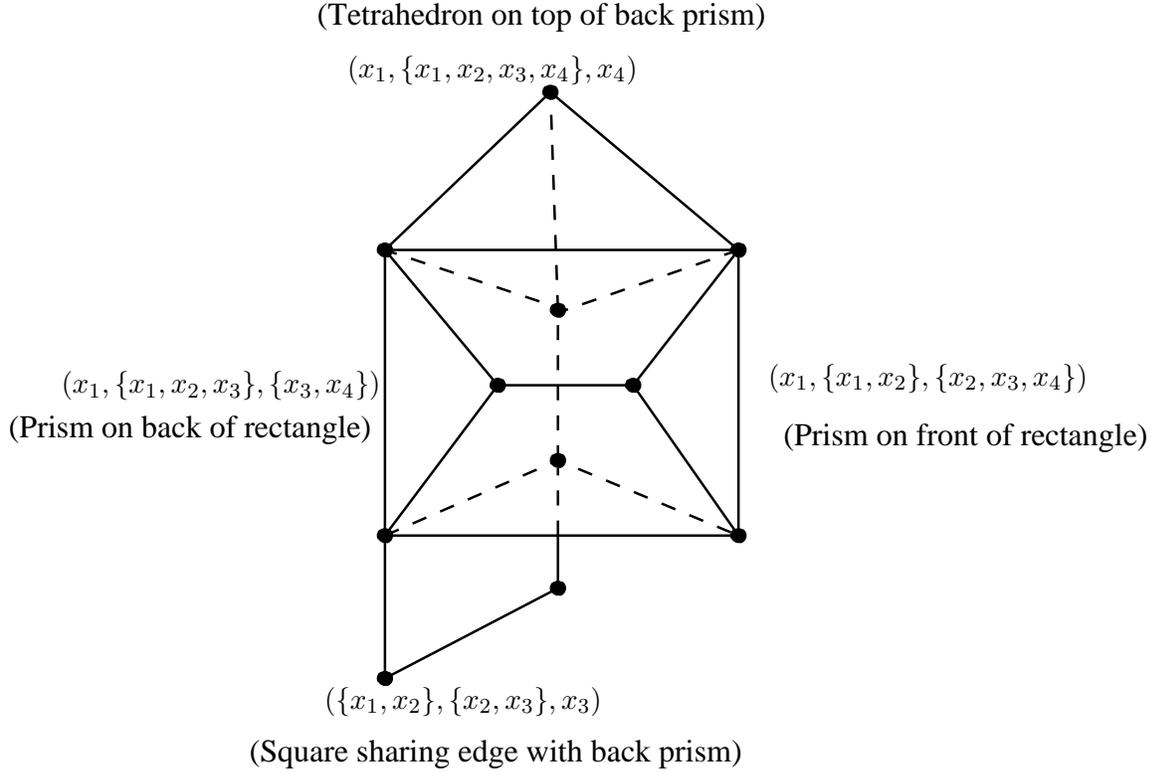}
\end{center}
\caption{$\ohom(G,H)$ from Example~\ref{bigexample}}
\label{ex1hom}
\end{figure}

It is straightforward that for a cell $\tau=(W_i)_{i\in [n]}\in \ohom(G,H)$, the least common multiple $x_\tau$ of the vertices of $\tau$ is given by the monomial 
\[ x_\tau = \prod_{\substack{i\in [n] \\ j\in W_i}}x_{j} \, .\]
Because of this, we sometimes refer to cells by both their names and their labels, e.g. $\tau$ or $x_\tau$.

We will need to order the monomial generators in our ideal by the revlex order, defined as follows.

\begin{definition}
For two monomials $x^\alpha$ and $x^\beta$ in the variables $x_1,\ldots,x_n$, we say that $x^\alpha$ is revlex larger than $x^\beta$ if the degrees of the monomials are the same and the variable with greatest index that appears in $x^\alpha / x^\beta$ has a negative exponent.
\end{definition}

We may consider the revlex order instead on ordered sequences of non-negative integers, where the order is obtained by considering the sequences as exponent vectors for monomials.
In a foreshadowing of our application of Proposition~\ref{acyclic}, we define the following complexes.

\begin{definition}
Let $\alpha:=(\alpha_1,\ldots,\alpha_m)$ be an integer vector and $x^\beta$ be a monomial of degree $n$. 
Denote by $\ohom(G,H)_\alpha$ the subcomplex of $\ohom(G,H)$ consisting of those faces whose monomial labels satisfy $\mathrm{deg}(x_i)\leq \alpha_i$, where by $\mathrm{deg}(x_i)$ we mean the degree of $x_i$ in the monomial.  
Denote by $\ohom(G,H)^{\geq \beta}$ the subcomplex of $\ohom(G,H)$ whose faces are those with vertices labeled by monomials $x^\alpha$ satisfying $x^\alpha\geq x^\beta$ in the revlex order.
\end{definition}


\section{Cointerval simplicial complexes and resolutions of ideals}\label{resolutions}

In this section we define cointerval simplicial complexes and prove that when a cointerval complex is used as a target object for the $\ohom$-construction, the resulting $\ohom$-complex supports a resolution of the corresponding ordered homomorphism ideal.

\begin{definition} For $H$ a simplicial complex on totally ordered vertex set $V(H)$ and $\tau \in H$ such that $k$ is the largest element of $\tau$, we define the \emph{right link} of $\tau$ in $H$ to be $\rlk_H(\tau) := (\lk_H(\tau))_{\{i > k\}}$, i.e. the induced subcomplex of $\lk(\tau)$ on the vertices ``to the right'' of $\tau$.
\end{definition}

\begin{definition} Let $H$ be a $d$-dimensional simplicial complex with totally ordered vertex set $V(H)$. 
We inductively define $H$ to be \emph{cointerval} if either $d =-1$ or $d \geq 0$ and
\begin{enumerate}
\item For each $i \in V(H)$, $\rlk_H(i)$ is cointerval.
\item For $i, j \in V(H)$ and $i < j$, $\rlk_H(j)$ is a subcomplex of $\rlk_H(i)$.
\end{enumerate}
\end{definition}

Dochtermann and Engstr\"{o}m \cite{DochtermannEngstromCellular} introduced the notion of cointerval hypergraphs to describe a family of hypergraphs where the homomorphism complex construction would support cellular resolutions for the special case where the test hypergraph is a hyperedge and the target is cointerval.
We refer to their cointerval hypergraphs as DE-cointerval hypergraphs to avoid confusion with our cointerval complexes.
The following example shows that not every DE-cointerval hypergraph is a cointerval complex when considered as a pure simplicial complex.
Similarly, as non-pure cointerval complexes are easy to construct, not every cointerval complex arises as a DE-cointerval hypergraph.

\begin{example}\label{equivalence}
Let $H$ be the hypergraph on $[6]$ with facets given by $\{2,5,6\}$ and $\{1,a,b\}$ for all $a,b \in \{3,4,5,6\}$.  
Then $H$ is DE-cointerval, but is not a cointerval complex as one can see by comparing the right links of $2$ and $3$.
To make this cointerval as a complex requires that we add the face $\{2,4,6\}$. 
\end{example}

We now state the main theorem of this section, which asserts that $\ohom$ complexes are contractible when the target complex is cointerval.
It follows from this that $\ohom(G,H)_{\alpha}$ is acyclic for any $\alpha$.

\begin{theorem}  \label{main} 
Let $G$ and $H$ be simplicial complexes on $[n]$ and $[m]$, respectively, and suppose $H$ is cointerval. 
Let $\alpha = (\alpha_1, \ldots, \alpha_m)$ be an integer vector and $x^\beta$ a degree $n$ monomial.  
Let $\Delta := \ohom(G,H)_{\alpha}^{\geq \beta}$. 
If $\Delta$ is nonempty, then $\Delta$ is contractible.
\end{theorem}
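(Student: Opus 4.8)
The plan is to collapse $\Delta$ onto a single point by ``pinning'' the coordinates of the $\ohom$-complex one at a time. Write a face of $\ohom(G,H)$ as a tuple $(W_1,\dots,W_n)$ with $\emptyset\ne W_i\subseteq[m]$, and recall that an ordered simplicial homomorphism, being weakly monotone, is determined by its monomial label. I would introduce values $v_1\le v_2\le\cdots\le v_n$ in $[m]$ and a decreasing chain of subcomplexes $\Delta=\Delta^{(0)}\supseteq\Delta^{(1)}\supseteq\cdots\supseteq\Delta^{(n)}$ defined greedily: once $v_1,\dots,v_{i-1}$ are chosen, let $\Delta^{(i-1)}$ be the subcomplex of those faces of $\Delta$ whose first $i-1$ parts are $\{v_1\},\dots,\{v_{i-1}\}$, and let $v_i$ be the smallest value taken by $\phi(i)$ over the vertices $\phi$ of $\Delta^{(i-1)}$. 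Then $\Delta^{(n)}$ is the single tuple $(\{v_1\},\dots,\{v_n\})$, i.e.\ the vertex $\psi^\circ:=(v_1,\dots,v_n)$ (which is a vertex of $\Delta$, as the construction will show). The entire theorem then follows from the claim that each $\Delta^{(i)}$ is a deformation retract of $\Delta^{(i-1)}$ --- this also guarantees inductively that $\Delta^{(i-1)}$ is nonempty at every stage, so the greedy choice of $v_i$ and the vertex realizing it both make sense --- since then $\Delta=\Delta^{(0)}\simeq\Delta^{(1)}\simeq\cdots\simeq\Delta^{(n)}=\{\psi^\circ\}$.

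The retraction $\Delta^{(i-1)}\searrow\Delta^{(i)}$ is the evident one: in each face, slide the $i$-th barycentric coordinate linearly onto the vertex $v_i$ of $\Delta_{[m]}$. Since $v_i$ is minimal among the $i$-th coordinates of vertices of $\Delta^{(i-1)}$, every face of $\Delta^{(i-1)}$ has $\min W_i\ge v_i$, so this homotopy only passes through faces of the form $(\{v_1\},\dots,\{v_{i-1}\},\,W_i\cup\{v_i\},\,W_{i+1},\dots,W_n)$; thus the step comes down to showing that adjoining $v_i$ to the $i$-th part of a face of $\Delta$ produces a face of $\Delta$. The degree and revlex conditions cutting out $\Delta$ are easy to preserve: every \emph{new} vertex $\psi$ of the enlarged face $\tau$ arises from a vertex $\psi'$ of the old one by resetting the $i$-th value to $v_i<\min W_i$, so $x_\psi$ is revlex-larger than $x_{\psi'}$ and hence still $\ge x^\beta$; and $\deg_{v_i}(x_\tau)$ is at most $\deg_{v_i}$ of the label of the vertex $\psi^*$ of $\Delta^{(i-1)}$ with $\psi^*(i)=v_i$, which is $\le\alpha_{v_i}$. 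The real point is that the enlarged tuple still lies in $\ohom(G,H)$: one must show that any selection whose $i$-th value is $v_i$ is an ordered simplicial homomorphism. Weak monotonicity of such a selection is immediate, and on a face $\sigma$ of $G$ not containing $i$ it agrees with a selection of the old face, so it suffices to treat $\sigma=\{j_1<\cdots<j_r<i<i_1<\cdots<i_s\}$, where $j_1,\dots,j_r$ are among the already-pinned coordinates and so are sent to $v_{j_1},\dots,v_{j_r}$.

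This is where the cointerval hypothesis is used. First I would record the sublemma that $\rlk_H(\tau)$ is cointerval for \emph{every} face $\tau$ of $H$, proved by induction on $|\tau|$: Proposition~\ref{link-property} gives $\rlk_H(\tau)=\rlk_{\rlk_H(\tau\setminus\max\tau)}(\max\tau)$, and part~(1) of the definition of cointerval applies. Now set $R:=\rlk_H(\{v_{j_1},\dots,v_{j_r}\})$ (with $R=H$ when $r=0$). Validity of the old face yields $\{v_{j_1},\dots,v_{j_r},\min W_i,\psi(i_1),\dots,\psi(i_s)\}\in H$ with the correct dimension, and non-degeneracy of the old homomorphism together with $v_1\le\cdots\le v_n$ forces its elements to be pairwise distinct and to satisfy $v_{j_1}<\cdots<v_{j_r}<\min W_i<\psi(i_1),\dots,\psi(i_s)$; since $v_{j_r}=\max\{v_{j_1},\dots,v_{j_r}\}$ this gives $\{\psi(i_1),\dots,\psi(i_s)\}\in\rlk_R(\min W_i)$. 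On the other hand $\psi^*$ is a genuine vertex of $\Delta$, so $\{v_{j_1},\dots,v_{j_r},v_i\}\in H$, i.e.\ $v_i\in V(R)$, and $v_i<\min W_i$; as $R$ is cointerval, part~(2) of the definition gives $\rlk_R(v_i)\supseteq\rlk_R(\min W_i)$, whence $\{v_i,\psi(i_1),\dots,\psi(i_s)\}\in R\subseteq\lk_H(\{v_{j_1},\dots,v_{j_r}\})$, so that $\psi(\sigma)=\{v_{j_1},\dots,v_{j_r},v_i,\psi(i_1),\dots,\psi(i_s)\}\in H$, and it has $|\sigma|$ pairwise distinct elements, hence dimension $|\sigma|-1$, as required.

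The main obstacle is the bookkeeping in the previous paragraph --- keeping track of which coordinates are already pinned, checking that the relevant right-link $R$ is cointerval, and above all arranging that $v_i\in V(R)$; the last is exactly why $v_i$ must be defined through an honest vertex $\psi^*$ of $\Delta^{(i-1)}$ rather than, say, as a minimum over the faces of $G$. The remaining points are routine but should be spelled out: that each $\Delta^{(i)}$ is genuinely a subcomplex, and that the coordinatewise sliding defines a continuous homotopy, compatible with face inclusions, that fixes $\Delta^{(i)}$ pointwise and has image $\Delta^{(i)}$ --- all standard for prodsimplicial complexes. (Equivalently, one may phrase the step via poset maps: $(W_1,\dots,W_n)\mapsto(W_1\cup\{v_1\},W_2,\dots,W_n)$ is a closure operator on the face poset of $\Delta$, and iterating such operators --- alternating with the interior operators that forget everything but $v_j$ in coordinate $j$ --- realizes the same chain of homotopy equivalences, which is the ``closure operator'' flavored proof mentioned in the introduction.)
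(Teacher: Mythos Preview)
Your argument is correct and coincides with the paper's second proof via closure operators: your greedily chosen $v_i$ is the paper's $g(i)$, and your two-step retraction ``adjoin $v_i$ to the $i$-th slot, then collapse that slot to $\{v_i\}$'' is exactly the pair $U_i,D_i$ of ascending/descending closure operators the authors use, with the cointerval check on $R=\rlk_H(\{v_{j_1},\dots,v_{j_r}\})$ playing the same role. One small point worth making explicit in your degree check: when $v_i\notin W_i$, the weak monotonicity of the vertices of the face forces $v_i\notin W_k$ for all $k>i$ as well, which is what actually gives $\deg_{v_i}(x_\tau)\le\deg_{v_i}(x_{\psi^*})$; the paper also elides this and instead first reduces to the case $\alpha_j\ge 1$ for all $j$. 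For completeness, note that the paper provides an independent first proof by induction on the number of vertices of $\Delta$, using a Swapping Lemma and a Removal Lemma to show that the revlex-smallest vertex lies in a unique facet and can therefore be collapsed away.
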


Proposition~\ref{acyclic} and Theorem~\ref{main} imply that the complex $\ohom(G,H)^{\geq \beta}$ supports a resolution of the ideal generated by a terminal segment up to $x^\beta$ of monomials under revlex ordering from the generating set of $I_{G,H,ord}$.

\begin{corollary}\label{maincor}
Given $G,H$ and $x^\beta$ as in Theorem~\ref{main}, let $I^{\geq \beta}_{G,H,ord}$ be the ideal whose generators are the monomial generators of $I_{G,H,ord}$ that are greater than or equal to $x^\beta$ in the revlex order.
The complex $\ohom(G,H)^{\geq \beta}$ supports a minimal cellular resolution of $I^{\geq \beta}_{G,H,ord}$.
\end{corollary}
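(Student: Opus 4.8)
The plan is to derive Corollary~\ref{maincor} quickly from Theorem~\ref{main}, using the acyclicity criterion of Proposition~\ref{acyclic} together with the minimality test stated immediately after it; essentially all of the substantive work is already contained in the proof of Theorem~\ref{main}, and what remains is a matching of combinatorial descriptions.

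First I would pin down the labeled complex $\ohom(G,H)^{\geq \beta}$. Its $0$-cells are exactly the ordered simplicial homomorphisms $\phi$ from $G$ to $H$ whose monomial $\prod_{i\in[n]}x_{\phi(i)}$ is $\geq x^\beta$ in the revlex order. Since $\phi$ is weakly order-preserving on the chain $[n]$, it is completely determined by the multiset of its values, which is precisely the information recorded by the monomial $\prod_{i\in[n]}x_{\phi(i)}$; hence the vertex labeling is injective, and its image is exactly the generating set of $I^{\geq \beta}_{G,H,ord}$. As all of these generators have degree $n$, none divides another, so they form the minimal generating set of $I^{\geq \beta}_{G,H,ord}$, and $\ohom(G,H)^{\geq \beta}$ is a polyhedral complex whose vertices are labeled bijectively by the minimal generators of $I^{\geq \beta}_{G,H,ord}$, exactly as required to invoke Proposition~\ref{acyclic}.

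Next I would verify the acyclicity hypothesis of Proposition~\ref{acyclic}: for every monomial $x^\gamma$ in $S$, the subcomplex $(\ohom(G,H)^{\geq \beta})_{\leq x^\gamma}$ must be acyclic over $\field$. Using the identity $x_\tau = \prod_{i\in[n],\, j\in W_i}x_j$ for a cell $\tau = (W_i)_{i\in[n]}$, the divisibility condition $x_\tau \mid x^\gamma$ is exactly the condition $\deg(x_j)\leq\gamma_j$ defining $\ohom(G,H)_\gamma$; combining this with the revlex condition on vertex labels yields $(\ohom(G,H)^{\geq \beta})_{\leq x^\gamma} = \ohom(G,H)_\gamma^{\geq \beta}$, which is precisely the complex $\Delta$ of Theorem~\ref{main} with $\alpha = \gamma$. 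By Theorem~\ref{main}, this complex is either empty or contractible, and hence acyclic over $\field$; so Proposition~\ref{acyclic} shows that $\F(\ohom(G,H)^{\geq \beta})$ is a resolution of $S/I^{\geq \beta}_{G,H,ord}$. To finish, I would apply the minimality test: if $\sigma = (V_i)_{i\in[n]} \subsetneq \tau = (W_i)_{i\in[n]}$ is a strict inclusion of cells, then $V_i \subseteq W_i$ for every $i$ with strict containment for at least one index, so $x_\tau / x_\sigma = \prod_{i\in[n],\, j\in W_i\setminus V_i}x_j$ is a nontrivial monomial and thus $x_\tau \neq x_\sigma$; therefore the resolution is minimal. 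The only point requiring any care inside this argument is the identification of $(\ohom(G,H)^{\geq \beta})_{\leq x^\gamma}$ with $\ohom(G,H)_\gamma^{\geq \beta}$, together with the standard convention that an empty complex is acyclic for the purposes of Proposition~\ref{acyclic}; the genuine difficulty lies entirely in Theorem~\ref{main}, which we are assuming here.
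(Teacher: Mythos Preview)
Your proposal is correct and follows exactly the approach the paper indicates: the paper does not give a detailed proof of this corollary but simply notes that it follows from Proposition~\ref{acyclic} and Theorem~\ref{main}, together with the minimality criterion stated after Proposition~\ref{acyclic}. You have carefully filled in precisely those details---the bijection between vertices and minimal generators, the identification $(\ohom(G,H)^{\geq \beta})_{\leq x^\gamma}=\ohom(G,H)_\gamma^{\geq \beta}$, and the minimality check---so your argument matches the paper's intended route.
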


\begin{remark}
All of our resolutions are linear, in the sense that the entries in the matrices for the maps $\phi_i$ in the cellular resolution of $I^{\geq \beta}_{G,H,ord}$ contain only degree one monomials.
The linearity of resolutions resulting from homomorphism complexes is an immediate consequence of their construction.
\end{remark}

In the remainder of this section, we provide two proofs of Theorem~\ref{main}.
In these proofs we refer to Lemmas~\ref{induced-cointerval},~\ref{rlk-cointerval}, and~\ref{links-cointerval}, technical lemmas whose verification we leave to Section~\ref{properties}.


\subsection{Proof by induction}

Our first proof of Theorem~\ref{main} is inductive and uses techniques similar to those used by Nagel and Reiner \cite{NagelReiner} when analyzing complexes-of-boxes resolutions.
The key to our argument is Lemma~\ref{swap}, which allows us to move from one vertex of our $\ohom$ complex to another by ``swapping'' the image of a vertex with another compatible image.

\begin{lemma}[Swapping Lemma] \label{swap} 
Let $\Delta$ be as in Theorem \ref{main}, and suppose $\gamma$ and $\phi$ are homomorphisms corresponding to distinct vertices of $\Delta$. 
Let $k$ be the smallest element of $[n]$ such that $\gamma(k) \neq \phi(k)$.
Suppose  $\gamma(k) > \phi(k)$, and let $\gamma'$ be given by
\[
\gamma'(x) = \begin{cases} \gamma(x), & x \neq k\\ \phi(k) & x=k\end{cases}.
\]
Then $\gamma'$ is a vertex of $\Delta$, and $x_{\gamma'} \geq x_{\gamma}$.

\begin{proof}
The second claim is immediate, so we need only show that $\gamma'$ is an ordered simplicial homomorphism from $G$ to $H$ such that the degree of $x_i$ in $x_{\gamma'}$ is no greater than $\alpha_i$.

We first check that $\gamma'$ is order preserving. 
Suppose $i <j$, $i,j \in [n]$. 
If $i, j \neq k$, then $\gamma'(i) = \gamma(i)$ and $\gamma'(j) = \gamma(j)$, so as $\gamma$ is order preserving, $\gamma'(i) \leq \gamma'(j)$. 
On the other hand, suppose $i < k$. 
Then $\gamma(i) = \phi(i)$ (as $k$ is the smallest integer on which $\phi$ and $\gamma$ disagree), and thus as $\phi$ is order preserving, $\gamma'(i) = \gamma(i) = \phi(i) \leq \phi(k) = \gamma'(k)$. 
Finally, suppose $j >k$. 
Then $\gamma'(j) = \gamma(j) \geq \gamma(k) > \phi(k) = \gamma'(k)$. 

We next show that $\gamma'$ maps faces of $G$ to faces of $H$ non-degenerately. 
Let $F$ be an $i$-face of $G$.  
If $k \notin F$, then $\gamma'(F) = \gamma(F)$, an $i$-face of $H$. 
If $k \in F$, we may write $F = A \cup \{k\} \cup B$, where each element of $A$ is less than $k$ and each element of $B$ is greater than $k$.

Observe that $\gamma'(A) = \gamma(A)$ and $\gamma'(B) = \gamma(B)$ are disjoint sets in $[m]$ of size $|A|$ and $|B|$, respectively. 
Thus $\gamma'(F)$ will have size $|F|$ as long as $\gamma'(k) \notin \gamma(A) \cup \gamma(B)$. 
Since $\gamma$ is order preserving, for any $b \in B$, $\gamma'(k) < \gamma(k) \leq \gamma(b)$, so $\gamma'(k) \notin \gamma(B)$. 
On the other hand, for $a \in A$, $\gamma(a) = \phi(a) \neq \phi(k) = \gamma'(k)$ (as $\phi$ must take $F$ to an $i$-face of $H$). 
So $\gamma'(k) \notin \gamma(A)$, and in particular if $\gamma(F)$ is a face of $H$, then it must be an $i$-face.

To see that $\gamma(F)$ is an $i$-face of $H$, note that $\gamma'(A) = \gamma(A) = \phi(A)$ is a face of $H$ and by Lemma~\ref{rlk-cointerval} we have that $H' = \rlk_{H}(\phi(A)) = \rlk_{H}(\gamma(A)) $ is cointerval.
By definition, $H'$ contains both $\phi(k)$ and $\gamma(k)$.
Additionally $\rlk_{H'}(\gamma(k))$ contains $\gamma(B)$. 
Then, as $\phi(k) < \gamma(k)$, we must have $\gamma(B) \in \rlk_{H'}(\phi(k))$. 
It thus follows that $\gamma'(F) = \gamma(A) \cup \phi(k) \cup \gamma(B)$ is a face of $H$. 

It remains only to show that for each $i \in [m]$, the degree of $x_i$ in the monomial label $x_{\gamma'}$ of $\gamma'$ is no greater than $\alpha_i$; that is, that $|(\gamma')^{-1}(i)| \leq \alpha_i$. 
If $i \neq \phi(k)$, then $|(\gamma')^{-1}(i)|  \leq |\gamma^{-1}(i)|  \leq \alpha_i$. 
On the other hand let $j = \phi(k)$, so $|(\gamma')^{-1}(j)| = |\gamma^{-1}(j)|  + 1$. 
Suppose $\gamma(l) = j$. 
If $l > k$, $\gamma(l) \geq \gamma(k) > j$, a contradiction. 
So $l < k$,  and thus $\phi(l) = \gamma(l) = j$, and in particular it follows that $\gamma^{-1}(j) \subseteq \phi^{-1}(j) -  \{k\}$. 
Thus $|(\gamma')^{-1}(j)| \leq |\phi^{-1}(j)| \leq \alpha_j$.
\end{proof}
\end{lemma}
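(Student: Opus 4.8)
The plan is to treat the two assertions separately. The inequality $x_{\gamma'} \geq x_{\gamma}$ is essentially bookkeeping in the revlex order: since $\gamma'$ agrees with $\gamma$ everywhere except at $k$, where it sends $k$ to $\phi(k)$ instead of $\gamma(k)$, the formal ratio $x_{\gamma'}/x_{\gamma}$ equals $x_{\phi(k)}/x_{\gamma(k)}$, both monomials have degree $n$, and because $\phi(k) < \gamma(k)$ the variable of largest index occurring in this ratio, namely $x_{\gamma(k)}$, occurs with negative exponent; this is precisely the condition defining revlex-larger. It then remains to show $\gamma'$ is a vertex of $\Delta = \ohom(G,H)_{\alpha}^{\geq\beta}$, which breaks into three checks: $\gamma'$ is weakly order preserving, $\gamma'$ carries faces of $G$ to faces of $H$ without collapsing dimension, and the exponent vector of $x_{\gamma'}$ is bounded by $\alpha$; the condition $x_{\gamma'} \geq x^\beta$ is then free, since $x_{\gamma'} \geq x_{\gamma} \geq x^\beta$ using that $\gamma \in \Delta$.

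For order preservation I would run a short case analysis on a pair $i < j$ in $[n]$ according to whether $i$ or $j$ equals $k$, invoking in the cases that involve $k$ the two facts that $\gamma$ and $\phi$ agree strictly below $k$ (minimality of $k$) and that $\gamma$ and $\phi$ are each order preserving; the case $i = k < j$ additionally uses $\gamma(k) > \phi(k) = \gamma'(k)$. For non-degeneracy, given an $i$-face $F$ of $G$ containing $k$, I would split $F = A \cup \{k\} \cup B$ with $A$ below $k$ and $B$ above $k$; then $\gamma'(A) = \gamma(A)$ and $\gamma'(B) = \gamma(B)$ are disjoint sets of the correct sizes, and I must check $\gamma'(k) = \phi(k)$ lies in neither. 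It avoids $\gamma(B)$ because $\gamma(b) \geq \gamma(k) > \phi(k)$ for each $b \in B$, and it avoids $\gamma(A) = \phi(A)$ because $\phi$ is non-degenerate on $F$. Hence $\gamma'(F)$ already has full cardinality $|F|$, so it suffices to verify it is a face of $H$ at all.

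That last point is the heart of the argument and the step I expect to be the main obstacle. Since $\gamma(A) = \phi(A)$ is a face of $H$, Lemma~\ref{rlk-cointerval} gives that $H' := \rlk_H(\gamma(A))$ is cointerval. Both $\phi(k)$ and $\gamma(k)$ are vertices of $H'$: they lie strictly to the right of $\gamma(A)$ (as $\phi$, resp. $\gamma$, is order preserving and non-degenerate on $F$) and in $\lk_H(\gamma(A))$ (as $\phi(F)$, resp. $\gamma(F)$, is a face of $H$). Unwinding the definition of the right link, together with Proposition~\ref{link-property}, and using that all of $\gamma(B)$ lies strictly to the right of $\gamma(k)$, one gets $\gamma(B) \in \rlk_{H'}(\gamma(k))$. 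Now apply clause (2) of the definition of cointerval to the pair $\phi(k) < \gamma(k)$ inside $H'$: $\rlk_{H'}(\gamma(k)) \subseteq \rlk_{H'}(\phi(k))$, so $\gamma(B) \in \rlk_{H'}(\phi(k))$, which unwinds back to $\gamma(A) \cup \{\phi(k)\} \cup \gamma(B) = \gamma'(F) \in H$. The delicate part here is keeping straight the successive ``restrict to the vertices on the right'' operations and confirming that $\gamma(B)$, which sits far to the right, survives each of them.

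Finally, for the exponent cap I would note that $\gamma'$ alters the fibers of $\gamma$ only by removing $k$ from $\gamma^{-1}(\gamma(k))$ and adjoining it to $\gamma^{-1}(\phi(k))$, so the sole fiber that could violate the bound is the one over $j := \phi(k)$, where $|(\gamma')^{-1}(j)| = |\gamma^{-1}(j)| + 1$. I would then show $\gamma^{-1}(j) \subseteq \phi^{-1}(j) \setminus \{k\}$: if $\gamma(l) = j$ then $l \geq k$ would force $\gamma(l) \geq \gamma(k) > j$, so $l < k$, whence $\phi(l) = \gamma(l) = j$ by minimality of $k$, and $l \neq k$. Since $\phi$ is a vertex of $\Delta$ and $k \in \phi^{-1}(j)$, this yields $|\gamma^{-1}(j)| \leq |\phi^{-1}(j)| - 1 \leq \alpha_j - 1$, hence $|(\gamma')^{-1}(j)| \leq \alpha_j$, which completes the verification that $\gamma' \in \Delta$.
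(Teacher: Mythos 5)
Your proposal is correct and follows essentially the same route as the paper's proof: the same case analysis for order preservation, the same decomposition $F = A \cup \{k\} \cup B$ with the cointerval property of $\rlk_H(\gamma(A))$ (via Lemma~\ref{rlk-cointerval}) supplying $\gamma(B) \in \rlk_{H'}(\phi(k))$, and the same fiber-containment argument $\gamma^{-1}(\phi(k)) \subseteq \phi^{-1}(\phi(k)) \setminus \{k\}$ for the exponent bound. The only differences are cosmetic: you spell out the revlex comparison and the membership of $\phi(k),\gamma(k)$ in $H'$, which the paper treats as immediate.
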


Our inductive proof depends on our ability to remove one vertex at a time from our complex, in revlex order, yet only affect one facet with this act.
Lemma~\ref{max} allows us to do this.

\begin{lemma}[Removal Lemma] \label{max} 
Let $\Delta$ be as in Theorem \ref{main}, and let $\phi$ be the vertex of $\Delta$ with the smallest label in the revlex order. 
If $\phi$ is not the only vertex of $\Delta$, then $\phi$ is properly contained in a unique facet of $\Delta$.

\begin{proof}  
For $i\in [n]$ and $l \in [m]$, define $\phi_{i,l}$ by
$$
\phi_{i,l}(x) = \begin{cases} \phi(x), & x \neq i\\ l, & x=i\end{cases}.
$$
Now for each $i$ let $S_i = \{ l : \phi_{i,l} \in \Delta \}$. Note that $\phi(i) \in S(i)$.

We claim that $\tau = (S_i)_{i \in [n]}$ defines a cell of $\Delta$. It is clear that if this is the case, any cell of $\Delta$ containing $\phi$ must be contained in $\tau$. It suffices to check that each vertex of $\tau$ is a vertex of $\Delta$ (the condition on the degree of the monomial label of $\tau$ is automatically satisfied, as the label of $\tau$ is the least common multiple of the labels of its vertices).

Let $\gamma$ be a vertex of $\tau$. Then for each $i$, $\gamma(i) = l$ where  $\phi_{i,l} \in \Delta$. It follows from the order preserving property of $\phi_{i,l}$  that $\phi(i-1) \leq l \leq \phi(i)$ (where the second inequality follows from the fact that the monomial label for $\phi_{i,l}$ must come before the label of $\phi$ in the revlex order). In particular it follows that  $x_{\gamma} \geq x_{\phi} \geq x^{\beta}$ in the revlex order.

We now need to show that $\gamma$ defines an ordered simplicial homomorphism from $G$ to $H$. We will induct on the number $r$ of elements of $[n]$ on which $\gamma$ and $\phi$ differ. If $r= 0$ or $1$ the result is immediate from the definition of $S(i)$. So suppose $r >1$. Let $l$ be the smallest index on which $\gamma$ and $\phi$ disagree. Define $\gamma'$ by 
$$
\gamma'(x) = \begin{cases} \gamma(x), & x \neq l\\ \phi(l), & x=l\end{cases}.
$$
Then $\gamma'$ is a vertex of $\tau$ that differs from $\phi$ in $r-1$ places, and thus is a vertex of $\Delta$. Furthermore, the first place in which $\gamma'$ and $\phi_{l, \gamma(l)}$ disagree is at $l$, and $\gamma(l) < \phi(l) =\gamma'(l)$, so by Lemma \ref{swap}, $\gamma$ is in $\Delta$, as desired.

Finally we claim that there is some $i$ such that $S(i) - \phi(i)$ is nonempty; this ensures that the containment $\phi \in \tau$ is proper. Let $\gamma$ be the revlex largest vertex of $\Delta$ (by assumption it must be distinct from $\phi$.) Let $k$ be the smallest index element of $[n]$ such that $\gamma(k) \neq \phi(k)$. If $\phi(k) < \gamma(k)$, we can take $\gamma'$ to be as in Lemma \ref{swap} to obtain a vertex of $\Delta$ which occurs before $\gamma$ in the revlex order, a contradiction. Thus we must have $\phi(k) > \gamma(k)$, so, by Lemma \ref{swap}, $\phi_{k,\gamma(k)}$ is a vertex of $\Delta$, and thus $S(k) - \phi(k)$ is nonempty.

\end{proof}

\end{lemma}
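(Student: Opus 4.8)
The plan is to produce, in one stroke, the candidate for the unique facet above $\phi$, verify it really is a cell of $\Delta$ and that it dominates every cell of $\Delta$ containing $\phi$, and then check that $\phi$ sits inside it properly. Since the cells of $\ohom(G,H)$ are multi-homomorphisms $(W_i)_{i\in[n]}$ ordered coordinatewise, the natural candidate is obtained by enlarging each coordinate of $\phi$ as far as $\Delta$ permits: for $i\in[n]$ and $l\in[m]$ let $\phi_{i,l}$ be the map agreeing with $\phi$ off $i$ and sending $i$ to $l$, put $S_i:=\{\,l:\phi_{i,l}\in\Delta\,\}$ (so $\phi(i)\in S_i$), and set $\tau:=(S_i)_{i\in[n]}$. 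Granting that $\tau$ is a cell of $\Delta$, uniqueness is immediate: if $C=(W_i)$ is any cell of $\Delta$ with $\phi\in C$, then for each $i$ and each $l\in W_i$ the selection sending $i\mapsto l$ and agreeing with $\phi$ elsewhere is a vertex of $C$, hence of $\Delta$, hence $l\in S_i$; thus $C\le\tau$. So $\tau$ is simultaneously a cell containing $\phi$ and an upper bound for all such cells, whence it is the unique facet containing $\phi$.

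The main work, and the step I expect to be the obstacle, is showing that $\tau$ is a cell of $\Delta$, i.e. that every vertex $\gamma$ of $\tau$ --- every selection with $\gamma(i)\in S_i$ for all $i$ --- lies in $\Delta$. From there, membership of $\tau$ itself is automatic, because the label of $\tau$ is the least common multiple of the labels of its vertices, so the coordinatewise degree bounds defining $\ohom(G,H)_\alpha$ are inherited (a maximum of quantities each $\le\alpha_j$), and the revlex condition defining $\ohom(G,H)^{\ge\beta}$ is by definition a condition on vertices. For the vertices I would induct on the number $r$ of indices where $\gamma$ disagrees with $\phi$; the cases $r\le 1$ are exactly the definition of the $S_i$. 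For $r>1$, let $l$ be the least index of disagreement; since $\phi_{l,\gamma(l)}\in\Delta$ and $\phi$ is revlex-minimal, comparing labels in coordinate $l$ gives $\gamma(l)\le\phi(l)$, so in fact $\gamma(l)<\phi(l)$. Replacing $\gamma(l)$ by $\phi(l)$ produces $\gamma'$, still a selection from the $S_i$ and disagreeing with $\phi$ in $r-1$ places, hence $\gamma'\in\Delta$ by induction. Now $\gamma'$ and $\phi_{l,\gamma(l)}$ are distinct vertices of $\Delta$ whose first disagreement is exactly $l$, where $\gamma'(l)=\phi(l)>\gamma(l)=\phi_{l,\gamma(l)}(l)$; applying the Swapping Lemma (Lemma~\ref{swap}) to this pair returns precisely $\gamma$ as a vertex of $\Delta$, closing the induction.

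Finally, to see that the containment is proper I must exhibit an index $k$ with $S_k\ne\{\phi(k)\}$. Let $\gamma$ be the revlex-largest vertex of $\Delta$, distinct from $\phi$ by hypothesis, and let $k$ be the least index with $\gamma(k)\ne\phi(k)$. If $\phi(k)<\gamma(k)$, then the Swapping Lemma applied to the pair $(\gamma,\phi)$ returns a vertex of $\Delta$ that is strictly revlex-larger than $\gamma$, contradicting maximality; hence $\phi(k)>\gamma(k)$, and the Swapping Lemma applied to the pair $(\phi,\gamma)$ returns $\phi_{k,\gamma(k)}$ as a vertex of $\Delta$, so $\gamma(k)\in S_k\setminus\{\phi(k)\}$. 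Therefore $\phi\subsetneq\tau$, as required.

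I expect the only delicate point beyond routine bookkeeping with the revlex order and the coordinatewise structure of $\ohom$ to be the inductive step above: one must reduce at the \emph{least} index of disagreement and then assign the two maps to the two roles in the Swapping Lemma so that its hypothesis --- the first argument is larger at the first point of disagreement --- is satisfied. Here it is $\gamma'$, having raised the $l$-th coordinate back up to $\phi(l)$, that plays the larger map, and Lemma~\ref{swap} then ``swaps it back down'' to recover $\gamma$. Everything else --- uniqueness, properness, and the passage from vertices of $\tau$ to $\tau$ itself --- is bookkeeping.
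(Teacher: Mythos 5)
Your proposal is correct and follows essentially the same route as the paper's own proof: the same cell $\tau=(S_i)_{i\in[n]}$, the same induction on the number of indices where a selection $\gamma$ disagrees with $\phi$, resolved by applying Lemma~\ref{swap} to the pair $\gamma'$ and $\phi_{l,\gamma(l)}$, and the same properness argument via the revlex-largest vertex. The only cosmetic difference is that you absorb the revlex and degree checks into the conclusion of the Swapping Lemma rather than recording the pointwise bounds $\phi(i-1)\leq \gamma(i)\leq\phi(i)$ explicitly, which is fine.
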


\begin{proof}[First proof of Theorem \ref{main}] 
Let $\Delta$ be as in the statement of the theorem. We will induct on the number of vertices of $\Delta$. If $\Delta$ is a single vertex, we are done. Now assume $\Delta$ has at least 2 vertices. We may assume the revlex smallest label of a vertex is $x^{\beta}$; let $x^{\omega}$ be the immediate predecessor of $x^{\beta}$ among the labels of vertices of $\Delta$. By induction $\Delta' = \ohom(G,H)_{\alpha}^{\geq \omega}$ is contractible.

By Lemma \ref{max}, there is a unique facet of $\Delta$ properly containing the vertex $\phi$ labelled by $x^{\beta}$. Then it follows from  \cite[Lemma 6.4]{NagelReiner}  that $\Delta$ is homotopy equivalent to the complex obtained from $\Delta$ by deleting $\phi$ and all cells containing it, that is, $\Delta'$.  Thus $\Delta$ is contractible.

\end{proof}


\subsection{Proof by closure operators}

Our second proof of Theorem~\ref{main} uses closure operators, a standard tool for homomorphism complexes that was also used by Dochtermann and Engstr\"{o}m \cite{DochtermannEngstromCellular}.
Recall that a poset map $f:P\rightarrow P$ is an \emph{ascending closure operator} if $f(x)\geq x$ for every $x\in P$ and $f^2=f$, while $f$ is a \emph{descending closure operator} if $f(x)\leq x$ for every $x\in P$ and $f^2=f$.  
It is known \cite{BjornerSurvey} that ascending and descending closure operators induce deformation retracts from the order complex of $P$ to the order complex of $f(P)$.
As the order complex of the face poset of a polyhedral cell complex is isomorphic to the barycentric subdivision of the complex, and hence is homeomorphic to the original complex, we may freely work on the level of order complexes for our topological arguments.

\begin{proof}[Second proof of Theorem~\ref{main}]

Let $\Delta$ be as in the theorem statement.  
We will show that $\Delta$ is contractible through $n$ pairs of ascending and descending closure operators that reduce the face poset of $\Delta$ to a single element.  
If $\alpha_i = 0$ for any $i$, by Lemma~\ref{induced-cointerval} we may reduce the problem to that coming from the complex $\ohom(G, H - i)$, so we may assume without loss of generality that $\alpha_i \geq 1$ for each $i \in [n]$.

Let $1_G$ and $1_H$ be the first elements of the vertex sets of $G$ and $H$, respectively.
Our first step is to map the face poset of $\Delta$ to the subposet of $\Delta$ whose elements are multihomomorphisms $\psi$ satisfying the condition that $\psi(1_G) = \{1_H\}$.
We do this through two closure operators.
Let $U_1(\psi)$ be defined by 
\[U_1(\psi)(i):=\left\{ 
\begin{array}{ll}
\psi(1_G)\cup\{1_H\} & \mathrm{if} \phantom{.} i=1_G \\
\psi(i)  & \mathrm{otherwise}
\end{array}
\right.
\, . \]
Note that $U_1$ is well-defined, since $\rlk(1_H)$ contains $\rlk(j)$ for every $j>1_H$, hence we can replace any simplex in $H$ of the form $\{v_1,v_2,\ldots,v_k\}$ with the simplex $\{1_H,v_2,\ldots,v_k\}$, and the second simplex is revlex larger than the first.
It is clear that $U_1$ is both ascending and a closure operator.
Now, let $D_1(\psi)$ be defined on the image of $U_1$ by
\[D_1(\psi)(i):=\left\{ 
\begin{array}{ll}
\{1_H\} & \mathrm{if} \phantom{.} i=1_G \\
\psi(i)  & \mathrm{otherwise}
\end{array}
\right.
\, . \]
Note that $D_1$ is well defined on the image of $U_1$ and is a descending closure operator.

We now proceed inductively to define $U_k$ and $D_k$, closure operators whose composition takes the image of $D_{k-1}$ to a subposet of $\Delta$ consisting of cells $\psi$ such that $|\psi(j)|=1$ for all $j\leq k$.  
Let $g(k)$ be the minimal element of $[m]=V(H)$ such that $g(k)\in\psi(k)$ for some cell $\psi$ in the image of $D_{k-1}$.
Let $U_k(\psi)$ be defined by 
\[
U_k(\psi)(i):=\left\{ 
\begin{array}{ll}
\psi(1)\cup\{g(k)\} & \mathrm{if} \phantom{.} i=1 \\
\psi(i)  & \mathrm{otherwise}
\end{array}
\right.
\]
and let $D_k$ be defined by
\[D_k(\psi)(i):=\left\{ 
\begin{array}{ll}
\{g(k)\} & \mathrm{if} \phantom{.} i=1,\ldots,k \\
\psi(i)  & \mathrm{otherwise}
\end{array}
\right.
\, . \]
Assuming $U_k$ and $D_k$ are well-defined, after applying these operators for each $k\in [n]$, we are left with a single cell of $\Delta$ satisfying $|\psi(j)|=1$ for all $j\in [n]$, i.e. a vertex of $\Delta$.  
Thus, we may conclude that $\Delta$ is contractible after verifying well-definedness.

Observe that if $U_k$ is well-defined, then it is clear that $D_k$ is well-defined.
Regarding well-definedness of $U_k$, elements of the domain of $U_k$ are of the form
\[\left( g(1),g(2),\ldots,g(k-1),\psi(k),\psi(k+1),\ldots,\psi(n)\right)\]
for some $\psi$.
Let 
\[\left( g(1),g(2),\ldots,g(k-1),a_k,a_{k+1},\ldots,a_n\right)\]
be a vertex in the corresponding cell, i.e. $a_i\in \psi(i)$ for all $i=k,\ldots,n$.
By the definition of $g(k)$, it follows that $g(k)\leq a_k$.
Let $S$ denote the right link of $g(k-1)$ in the right link of $g(k-2)$ in the right link of $g(k-3)$, etc, until reaching $g(1)$; since $H$ is cointerval, it follows that $S$ is cointerval.
Therefore, $\rlk(a_k)\subseteq \rlk(g(k))$ in $S$, and hence we may add $g(k)$ to $\psi(k)$ and obtain a cell in $\Delta$, all of whose vertices are revlex larger than $x^\beta$.
It follows that $U_k$ is well-defined.

\end{proof}


\section{Properties of cointerval complexes}\label{properties}

We now turn our attention to studying the class of cointerval simplicial complexes as geometric objects.  
The class of \textit{shifted} simplicial complexes \cite{Erdos-Ko-Rado, Kalai} is fundamental in the study of $f$-vectors of simplicial complexes.
Given a simplicial complex $H$, the process of \textit{combinatorial shifting} \cite{Erdos-Ko-Rado} produces a shifted simplicial complex with the same $f$-vector as $H$.  
It is often easier to prove combinatorial results for shifted simplicial complexes than for arbitrary complexes.

\begin{definition}
Let $H$ be a simplicial complex on vertex set $[n]$. 
We say that $H$ is \textit{shifted} if, for any face $F \in H$, any vertex $i \in F$, and any $j < i$ such that $j \notin F$, the set $(F \setminus \{i\})\cup\{j\}$ is a face of $H$. 
\end{definition}

Every shifted simplicial complex is cointerval.  
There are, however, cointerval complexes that are not shifted, e.g. the complex on $[4]$ with maximal faces $\{1,3\},\{2,4\}$, and $\{1,4\}$ is cointerval but not shifted under any labeling of the vertices.
Despite this, we aim to prove that cointerval simplicial complexes share some of the same ``nice'' properties that are satisfied by shifted simplicial complexes.  
Specifically, we will show that the family of cointerval simplicial complexes is closed under taking links and deletions, and that any cointerval simplicial complex is vertex-decomposable.

\subsection{Induced subcomplexes and links}

We now introduce several lemmas regarding induced subcomplexes and links in cointerval complexes.

\begin{lemma} \label{induced-cointerval} Let $H$ be a cointerval d-complex on $n$ vertices and $H'$ an induced subcomplex. Then $H'$ is cointerval.

\begin{proof}

We induct on $n$.  The result is obvious when $n \leq 2$, so suppose $n \geq 3$.  It is sufficient to prove that if $H$ is a cointerval simplicial complex on $[n]$ and $k \in [n]$, then $H-k$ is cointerval. 

Choose $i,j \in [n]-k$ with $i<j$.  We observe that $\rlk_{H-k}(i) = \rlk_H(i)-k$.  Since $\rlk_H(i)$ is a cointerval complex and $|V(\rlk_H(i))| < n$, we see that $\rlk_H(i)-k$ is cointerval by our inductive hypothesis.  Similarly, since $H$ is cointerval, we have $\rlk_H(j) \subseteq \rlk_H(i)$ and hence $$\rlk_{H-k}(j) = \rlk_H(j)-k \subseteq \rlk_H(i)-k = \rlk_{H-k}(i).$$  Thus $H-k$ is cointerval.

\end{proof}

\end{lemma}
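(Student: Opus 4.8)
The plan is to reduce to the deletion of a single vertex and then induct on the number of vertices. Since every induced subcomplex $H'$ of $H$ is obtained from $H$ by successively deleting vertices, it suffices to prove that if $H$ is cointerval on $[n]$ and $k \in [n]$, then the deletion $H - k$ is cointerval; iterating this claim then handles an arbitrary induced subcomplex. I would prove the single-vertex claim by induction on $n$, the cases $n \leq 2$ being immediate since the conditions defining cointerval are then trivial.

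For the inductive step with $n \geq 3$, the first thing to pin down is the identity
\[
\rlk_{H-k}(i) = \rlk_H(i) - k \qquad \text{for every } i \in [n]\setminus\{k\}.
\]
This holds because $\lk_{H-k}(i) = \lk_H(i) - k$ (deleting a vertex other than $i$ commutes with taking the link at $i$) and restricting to the vertices larger than $i$ commutes with deleting $k$ — correct both in the case $k < i$, where $k \notin V(\rlk_H(i))$ so the deletion does nothing, and in the case $k > i$. Granting this, condition (1) for $H - k$ follows: $\rlk_H(i)$ is cointerval because $H$ is, and its vertex set lies in $\{i+1,\dots,n\}$, so it has at most $n - 1$ vertices and the inductive hypothesis gives that $\rlk_{H-k}(i) = \rlk_H(i) - k$ is cointerval. (Alternatively one could induct on $\dim H$, since $\dim \rlk_H(i) < \dim H$.)

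Condition (2) for $H - k$ is then immediate: for $i < j$ in $[n]\setminus\{k\}$, cointervality of $H$ gives $\rlk_H(j) \subseteq \rlk_H(i)$, and deleting $k$ preserves this inclusion, so $\rlk_{H-k}(j) \subseteq \rlk_{H-k}(i)$. This completes the induction and hence the proof. I do not expect a genuine obstacle here; the only point needing a little care is the right-link identity above, together with the observation that $\rlk_H(i)$ has strictly fewer than $n$ vertices, which is exactly what makes the induction close.
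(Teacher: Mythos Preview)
Your proof is correct and follows essentially the same approach as the paper: reduce to deleting a single vertex, induct on $n$, use the identity $\rlk_{H-k}(i) = \rlk_H(i) - k$ together with the fact that $\rlk_H(i)$ has fewer than $n$ vertices to verify condition (1), and observe that deletion preserves the containment $\rlk_H(j)\subseteq\rlk_H(i)$ for condition (2). If anything, you give slightly more justification for the right-link identity than the paper does.
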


\begin{lemma} \label{rlk-cointerval} 
Let $H$ be cointerval, $\tau \in H$. Then $\rlk_H(\tau)$ is cointerval.

\begin{proof} 
This follows inductively from Proposition~\ref{link-property} and the property that $\rlk_{H}(i)$ is cointerval for all $i \in H$.
\end{proof}

\end{lemma}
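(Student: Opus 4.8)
The plan is to induct on the number of vertices of $\tau$. When $|\tau|\le 1$ there is nothing to do: $\rlk_H(\emptyset)=H$ is cointerval by hypothesis, and $\rlk_H(\{i\})=\rlk_H(i)$ is cointerval by condition (1) of the definition of cointerval, which is available since $\dim H\ge 0$ as soon as $H$ has a nonempty face. So suppose $|\tau|\ge 2$, let $i$ be the smallest vertex of $\tau$, and set $\sigma:=\tau\setminus\{i\}$, so that every vertex of $\sigma$ exceeds $i$.

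The engine of the argument is the identity
\[ \rlk_H(\tau)\;=\;\rlk_{\,\rlk_H(i)}(\sigma). \]
Granting it, the induction closes immediately: $\rlk_H(i)$ is cointerval by condition (1); $\sigma$ is a face of $\rlk_H(i)$, since $\sigma\in\lk_H(i)$ and all its vertices lie to the right of $i$; and $|\sigma|=|\tau|-1$, so applying the inductive hypothesis to the cointerval complex $\rlk_H(i)$ and its face $\sigma$ shows that $\rlk_{\rlk_H(i)}(\sigma)=\rlk_H(\tau)$ is cointerval.

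To establish the identity I would chain together three elementary facts. First, for an induced subcomplex $K_W\subseteq K$ and a face $\sigma\subseteq W$ one has $\lk_{K_W}(\sigma)=(\lk_K(\sigma))_W$; taking $K=\lk_H(i)$ and $W=\{j:j>i\}$ gives $\lk_{\rlk_H(i)}(\sigma)=(\lk_{\lk_H(i)}(\sigma))_{\{j>i\}}$. Second, Proposition~\ref{link-property} identifies $\lk_{\lk_H(i)}(\sigma)$ with $\lk_H(\{i\}\cup\sigma)=\lk_H(\tau)$. Third, restricting once more to the vertices to the right of $k:=\max\tau$, and using that nested induced subcomplexes compose together with $\{j>i\}\cap\{j>k\}=\{j>k\}$ (because $i<k$) and $\max\sigma=k$, gives $\rlk_{\rlk_H(i)}(\sigma)=(\lk_H(\tau))_{\{j>k\}}=\rlk_H(\tau)$. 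The only real obstacle is this last piece of bookkeeping: because a right link is a link followed by a one-sided induced-subcomplex restriction, one must verify that restricting $\lk_H(\tau)$ first past $i$ and then past $k$ agrees with restricting it once past $k$, and that $\sigma$ genuinely survives as a face of the reduced complex $\rlk_H(i)$ so that the inductive hypothesis truly applies. Beyond Proposition~\ref{link-property} there is no further content --- this is exactly what ``follows inductively from Proposition~\ref{link-property}'' is meant to convey.
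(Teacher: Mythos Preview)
Your proof is correct and is precisely the argument the paper intends: you unpack the one-line proof by inducting on $|\tau|$, peeling off the smallest vertex $i$, and using Proposition~\ref{link-property} together with the evident compatibility of links with induced subcomplexes to obtain $\rlk_H(\tau)=\rlk_{\rlk_H(i)}(\sigma)$, after which condition~(1) and induction finish the job. There is no substantive difference from the paper's approach---you have simply supplied the details.
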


\begin{lemma} \label{links-cointerval}

Let $H$ be a cointerval simplicial complex of dimension $d$ on $[n]$.  Then $\lk_H(\tau)$ is cointerval for any face $\tau \in V(H)$.

\begin{proof}
By Proposition \ref{link-property}, it is sufficient to show that the link of any vertex $i \in H$ is cointerval. Let $G:= \lk_{H}(i)$, and suppose $V(G)$ is nonempty.  We prove the claim by induction on $d$ and on $i$.  When $d=0$ or $d=1$, the result is obvious for all $i \in [n]$. 

Suppose now that $d \geq 2$, and that the link of a vertex in a cointerval simplicial complex of dimension at most $d-1$ is cointerval.  When $i=1$, we know that $\lk_{H}(1) = \rlk_{H}(1)$, which is cointerval by definition.  Next suppose that $i>1$, and that $\lk_{H}(p)$ is cointerval for all $p<i$.  Suppose that $V(G) = \{i_1<i_2<\cdots<i_k\}$ and that $i_{j-1} < i < i_j$.  

In order to show that $G$ is cointerval, we need to show that $\rlk_{G}(i_{\ell})$ is cointerval for all $1 \leq \ell \leq k$ and that if $i_{\ell} < i_m$, then $\rlk_{G}(i_m) \subseteq \rlk_{G}(i_{\ell})$.  First we show that $\rlk_{G}(i_{\ell})$ is cointerval by examining two cases.  If $\ell \geq j$, then $\rlk_{G}(i_{\ell}) = \rlk_{\rlk_{H}(i)}(i_{\ell})$.  Since $\rlk_{H}(i)$ is cointerval, $\rlk_{G}(i_{\ell})$ is cointerval as well.  On the other hand, if $\ell < j$, then $\rlk_{G}(i_{\ell}) = \lk_{\lk_{H}(i)}(i_{\ell})|_{\{i_r>i_{\ell}\}} = \lk_{\lk_{H}(i_{\ell})}(i)|_{\{i_r > i_{\ell}\}}$ by Proposition \ref{link-property}.   By our inductive hypothesis on $i$, $\lk_{H}(i_{\ell})$ is cointerval of dimension at most $d-1$.  By induction on $d$, $\lk_{\lk_{H}(i_{\ell})}(i)$ is also cointerval.  Thus by Lemma \ref{induced-cointerval}, $\rlk_{G}(i_{\ell})$ is cointerval.

Finally suppose $i_{\ell} < i_m$, and let $F$ be a face in $\rlk_{G}(i_m)$.  We show that $F \in \rlk_{G}(i_{\ell})$ by examining three cases.  

If $i < i_{\ell} < i_m$, then $F \in \rlk_{\rlk_H(i)}(i_m) \subseteq \rlk_{\rlk_H(i)}(i_{\ell})$ since $\rlk_H(i)$ is cointerval.  Thus $F \in \rlk_{G}(i_{\ell})$.  

If $i_{\ell} < i < i_m$, then ${i_m} \cup F \in \rlk_H(i)$.  Since $i_{\ell}<i$ and $H$ is cointerval, ${i_m} \cup F \in \rlk_{H}(i_{\ell})$ as well.  Finally, since $i,i_m$ are vertices of the cointerval complex $\rlk_H(i_{\ell})$, and $F \in \rlk_{\rlk_H(i_{\ell})}(i_m) \subseteq \rlk_{\rlk_H(i_{\ell})}(i)$, we see that $F \in \rlk_{G}(i_{\ell})$. 

If $i_{\ell}<i_m<i$, then $F \cup i \in \rlk_H(i_m) \subseteq \rlk_H(i_{\ell})$ (again since $H$ is cointerval).  Thus $F \in \rlk_G(i_{\ell})$. 
\end{proof}
\end{lemma}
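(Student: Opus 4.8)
The plan is to reduce, via Proposition~\ref{link-property}, to linking a single vertex, and then to run a double induction: on $d := \dim H$, and, for fixed $d$, on the vertex $i$ being linked. For the reduction, suppose we knew that $\lk_H(v)$ is cointerval for every vertex $v$ of every cointerval complex. Then for a face $\tau = \{v\}\cup\tau'$ with $v$ its least element we may write $\lk_H(\tau) = \lk_{\lk_H(v)}(\tau')$ and induct on $|\tau|$, using that $\lk_H(v)$ is again cointerval. So it is enough to show $\lk_H(i)$ is cointerval for a vertex $i$, and we may assume $V(\lk_H(i))\neq\emptyset$.

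For the double induction, the cases $d \le 1$ are immediate, and in the inductive step the case $i = 1$ is handled at once since $\lk_H(1) = \rlk_H(1)$, which is cointerval by definition. For $i > 1$ we obtain two hypotheses: the outer one, that vertex links of cointerval complexes of dimension $< d$ are cointerval; and the inner one, that $\lk_H(p)$ is cointerval for all vertices $p < i$ (and has dimension at most $d-1$). Writing $V(G) = \{i_1 < \cdots < i_k\}$ for $G := \lk_H(i)$, with $i_{j-1} < i < i_j$, we must verify the two cointerval axioms for $G$: that each $\rlk_G(i_\ell)$ is cointerval, and that $\rlk_G(i_m) \subseteq \rlk_G(i_\ell)$ when $i_\ell < i_m$. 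For the first, when $i_\ell > i$ one identifies $\rlk_G(i_\ell)$ with $\rlk_{\rlk_H(i)}(i_\ell)$, which is cointerval because $\rlk_H(i)$ is (Lemma~\ref{rlk-cointerval}); when $i_\ell < i$, Proposition~\ref{link-property} rewrites $\rlk_G(i_\ell)$ as an induced subcomplex of $\lk_{\lk_H(i_\ell)}(i)$, which is cointerval by the inner hypothesis applied to $\lk_H(i_\ell)$ and the outer hypothesis applied to its link, so Lemma~\ref{induced-cointerval} finishes it. For the containment, split into the three cases $i < i_\ell < i_m$, $i_\ell < i < i_m$, and $i_\ell < i_m < i$; in each, a face $F \in \rlk_G(i_m)$ is reinterpreted as a face of $H$ containing $i$, and then the cointerval inequalities of $H$ itself, combined with passing between $\lk$ and $\rlk$ via Proposition~\ref{link-property} and the inclusion $\rlk_H(\cdot)\subseteq\lk_H(\cdot)$, deposit $F$ into $\rlk_G(i_\ell)$.

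The main obstacle is conceptual rather than computational: \emph{cointerval} constrains only right links, whereas $\lk_H(i)$ also records vertices to the left of $i$, so those left-hand vertices must be converted into right links of genuinely lower-dimensional cointerval complexes before Lemmas~\ref{induced-cointerval} and~\ref{rlk-cointerval} can be applied — which is precisely what the dimension/vertex double induction arranges. Within that framework the remaining work is careful bookkeeping: making sure the ambient vertex-order restrictions defining the various right links line up correctly when one invokes identities such as $\lk_H(\{i_\ell,i\}) = \lk_{\lk_H(i_\ell)}(i)$. The fiddliest of the containment cases is $i_\ell < i < i_m$, where one must route $F$ through the cointerval complex $\rlk_H(i_\ell)$, using both $\rlk_H(i)\subseteq\rlk_H(i_\ell)$ and the cointerval inequality $\rlk_{\rlk_H(i_\ell)}(i_m)\subseteq\rlk_{\rlk_H(i_\ell)}(i)$.
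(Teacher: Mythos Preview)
Your proposal is correct and follows essentially the same argument as the paper: reduction to a single vertex via Proposition~\ref{link-property}, double induction on $d$ and $i$, the same two-case split (depending on whether $i_\ell$ lies to the right or left of $i$) to show each $\rlk_G(i_\ell)$ is cointerval, and the same three-case split (on the position of $i$ relative to $i_\ell < i_m$) for the containment, with the middle case routed through $\rlk_H(i_\ell)$ exactly as you describe.
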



\subsection{Vertex-Decomposability}

Our next goal is to study the class of vertex-decomposable simplicial complexes.  Roughly speaking, vertex-decomposable complexes are inductively constructed from a simplex by attaching well-behaved simplicial cones. 

\begin{definition}
A simplicial complex $H$ is \textit{vertex-decomposable} if 
\begin{itemize}
\item $H$ is a simplex or $H = \{\emptyset\}$, or 
\item there exists a vertex $x \in H$ such that 
\begin{itemize}
\item $H \setminus \{x\}$ and $\lk_{H}(x)$ are vertex decomposable, and 
\item no facet of $\lk_{H}(x)$ is a facet of $H \setminus \{x\}$. 
\end{itemize}
\end{itemize}
\end{definition}

If $H$ is vertex-decomposable, the specified vertex $x$ from the definition is called a \textit{shedding vertex} for $H$.  The concept of vertex-decomposability for non-pure simplicial complexes was introduced by Bj\"orner and Wachs \cite{BjornerWachsI, BjornerWachsII}.  They prove that any shifted simplicial complex is vertex-decomposable, and that any vertex-decomposable complex is (non-pure) shellable (hence sequentially Cohen-Macaulay).  Since shifted simplicial complexes provide a large class  of cointerval simplicial complexes, it is natural to ask whether or not cointerval complexes are also vertex-decomposable.  

\begin{theorem}\label{decomposable}
Let $H$ be a $(d-1)$-dimensional cointerval complex on $[n]$.  Then $H$ is vertex-decomposable. 
\end{theorem}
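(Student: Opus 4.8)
The plan is to induct on the number of vertices $n=|V(H)|$. If $H$ is a simplex (in particular if $n\le 1$), then $H$ is vertex-decomposable by definition, so assume $H$ is not a simplex. For any vertex $v$, Lemma~\ref{links-cointerval} shows $\lk_H(v)$ is cointerval and Lemma~\ref{induced-cointerval} shows $H\setminus\{v\}$ is cointerval; each has fewer than $n$ vertices, hence each is vertex-decomposable by the inductive hypothesis. Thus it suffices either to exhibit a \emph{shedding vertex} --- a vertex $v$ no facet of whose link $\lk_H(v)$ is a facet of $H\setminus\{v\}$ --- or, in the one exceptional situation below, to reduce directly to a smaller cointerval complex.

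I would then split into cases. $(1)$ If $H$ is a cone, say with a vertex $c$ lying in every facet, then $H=c\ast\lk_H(c)$; since $\lk_H(c)$ is cointerval on $n-1$ vertices it is vertex-decomposable by induction, and a cone over a vertex-decomposable complex is again vertex-decomposable (an elementary consequence of the definition), so $H$ is as well. Note the apex $c$ is itself never a shedding vertex, which is why cones must be treated separately; so assume now $H$ is not a cone. $(2)$ If $H$ has an isolated vertex $v$ (i.e. $\{v\}$ is a facet), then $\lk_H(v)=\{\emptyset\}$, and since $H\setminus\{v\}$ has a vertex, $\emptyset$ is not one of its facets; hence $v$ is a shedding vertex. $(3)$ If $H$ has a leaf $\ell$ --- a degree-one vertex, contained in the unique facet $\{\ell,p\}$ --- then, because $H$ is cointerval and not a simplex, the edge $\{\ell,p\}$ cannot be a connected component of $H$ (this uses that right links propagate adjacencies to the left), so $p$ has a second neighbor and the facet $\{p\}$ of $\lk_H(\ell)$ is not a facet of $H\setminus\{\ell\}$; thus $\ell$ is a shedding vertex.

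In the remaining case $H$ is cointerval, is not a simplex, is not a cone, and has neither an isolated vertex nor a leaf; here I would show that $v:=\max V(H)$ is a shedding vertex. The shedding condition can fail for $v$ only if there is a \emph{pendant facet through $v$}: a facet $G=B\cup\{v\}$ with $\lk_H(B)=\{\emptyset,\{v\}\}$, i.e. $B$ extends inside $H$ to a facet only by adjoining $v$. Writing $b:=\max B$, the absence of leaves forces $|B|\ge 2$; the edge $\{b,v\}\in H$ together with the nesting $\rlk_H(b)\subseteq\rlk_H(i)$ for $i<b$ forces $v$ to be adjacent to every vertex $\le b$; and passing to $\rlk_H(B\setminus\{b\})$, which is cointerval by Lemma~\ref{rlk-cointerval}, and iterating this propagation through the recursive right-link structure, one shows that such a $B$ cannot exist, thereby contradicting the hypothesis that $H$ is not a cone. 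Carrying out this last step --- ruling out pendant facets through the top vertex using only the recursiveness and nestedness of right links --- is the main technical obstacle; the other cases are routine once the bookkeeping is in place. Combining the cases, $H$ is vertex-decomposable.
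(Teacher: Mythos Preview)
Your case analysis in (1)--(3) is plausible, but case (4) contains a genuine gap: the maximum vertex $v=\max V(H)$ need \emph{not} be a shedding vertex under your hypotheses. Take $H$ on $[4]$ with facets $\{1,2,4\}$, $\{1,3,4\}$, and $\{2,3\}$. A direct check shows $H$ is cointerval (the right links are $\rlk_H(1)=\langle\{2,4\},\{3,4\}\rangle$, $\rlk_H(2)=\langle\{3\},\{4\}\rangle$, $\rlk_H(3)=\langle\{4\}\rangle$, $\rlk_H(4)=\{\emptyset\}$, each nested in the previous and each cointerval). Moreover $H$ is not a simplex, is not a cone, and has neither isolated vertices nor leaves, so your argument lands squarely in case (4). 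But $\lk_H(4)$ has facets $\{1,2\}$ and $\{1,3\}$, while $H\setminus\{4\}$ has facets $\{1,2\}$, $\{1,3\}$, $\{2,3\}$; both facets of the link are facets of the deletion, so $v=4$ is not a shedding vertex. Here $B=\{1,2\}$ is exactly a ``pendant facet through $v$'' in your sense, yet $H$ is not a cone, so the contradiction you hope to derive from the right-link recursion simply does not materialize. (For the record, in this example vertices $2$ and $3$ are shedding, while $1$ and $4$ are not.)

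The paper sidesteps this by choosing the shedding vertex more carefully: it lets $\sigma$ be the revlex-smallest subset of $[n]$ that is \emph{not} a face and sets $j=\max\sigma$. The payoff is that $\{1,\dots,j-1\}$, being revlex smaller than $\sigma$, is guaranteed to be a face of $H$. Hence for any facet $F$ of $\lk_H(j)$ one has $F_L:=F\cap[1,j-1]\subsetneq[1,j-1]$, so there is always a spare vertex $i<j$ outside $F$; cointervality of $\lk_H(F_L)$ then lets $i$ replace $j$, showing $F\cup\{i\}\in H\setminus\{j\}$. In the example above this yields $\sigma=\{1,2,3\}$ and $j=3$, which is indeed shedding. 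Your scheme lacks any analogue of this ``spare vertex below $j$'' mechanism, and the appeal to iterating through right links cannot manufacture one.
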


\begin{proof}
We prove the claim by induction on $d$ and $n$.  
The empty complex is vertex-decomposable by definition, and it is clear that any $0$-dimensional cointerval complex is vertex decomposable, so suppose $d >1$.  
The only $(d-1)$-dimensional complex on $n=d$ vertices is a simplex, which is vertex decomposable by definition.  

Suppose now that $n > d$, and that any cointerval complex $\Gamma$ with $\dim H' < \dim H$ or $\dim H' = \dim H$ and $|V(H')| < |V(H)|$ is vertex-decomposable.  
We will use the reverse-lexicographic (revlex) total order on $2^{[n]}$, in which $F<_{revlex}G$ if $\max\{F\setminus G \cup G \setminus F\} \in G$. 
Let $\sigma \subset [n]$ be the revlex smallest subset of $[n]$ that is not a face in $H$ (i.e. $\tau \in H$ for all $\tau <_{revlex} \sigma$, but $\sigma \notin H$).  
Since $n>d$, $H$ is not a simplex, and such a subset $\sigma$ exists.  
Let $j$ denote the largest element of $\sigma$, and notice that $j>1$ since $H$ is nonempty. 
We claim that $j$ is a shedding vertex for $H$.  

By Lemma \ref{links-cointerval}, $\lk_{H}(j)$ is cointerval, and $H \setminus\{j\}$ is cointerval since it is the restriction of $H$ to $[n]\setminus \{j\}$.  
Thus we need only show that no facet of $\lk_{H}(j)$ is a facet of $H \setminus \{j\}$.  
Suppose that $F$ is a facet of $\lk_{H}(j)$ and partition $F = F_L \cup F_R$, where $F_L = F \cap \{1, \ldots, j-1\}$, and $F_R = F \cap \{j+1, \ldots, n\}$.  

Observe that $F_L$ must be a proper subset of $\{1,\ldots,j-1\}$.  Indeed, if $F_L = \{1,\ldots,j-1\}$, then we would have $\{1,\ldots,j\} \subseteq \{j\} \cup F \in H$, and hence $\{1,\ldots,j\} \in  H$.  However $\sigma \subseteq \{1,\ldots,j\}$, which would contradict our assumption that $\sigma$ is not a face in $H$.  Thus there is some vertex $i \in \{1,\ldots,j-1\} \setminus F_L$.  

Now we consider $\lk_{H}(F_L)$, which is cointerval by Lemma \ref{links-cointerval}.  Since $\{1,\ldots,j-1\} <_{revlex} \sigma$, we see that $\{1,\ldots,j-1\} \in H$, and hence $F_L \cup \{i\} \in H$ as well.  Thus $i$ is a vertex in $\lk_{H}(F_L)$.  Moreover, $\{j\} \cup F_R \in \lk_{H}(F_L)$ as well, and $i < j$.  Since $\lk_{H}(F_L)$ is cointerval, it follows that $\{i\} \cup F_R \in \lk_{H}(F_L)$ as well.  Thus $\{i\} \cup F \in H \setminus\{j\}$, as desired. 

\end{proof}

\begin{remark}
The family of matroid complexes is also closed under links and deletions, so it is again natural to ask if matroid complexes are cointerval.  
This question has a negative answer in general, as it is straightforward to check case-by-case and see that the Fano and anti-Fano matroids cannot have a vertex ordering producing a cointerval complex.
However, it is also straightforward to show that matroid complexes of rank two, i.e. complete multipartite graphs, are cointerval under a suitable ordering.
\end{remark}


\section{Nonnesting monomial ideals}\label{families}

In this section, we introduce \emph{nonnesting monomial ideals} and apply the homomorphism approach to this family.
Our purpose is to illustrate how an ideal can have an underlying structure, one defined by simplicial homomorphisms, that is not immediately clear from its definition.
We begin by recalling the definition of nonnesting partitions; these objects were defined by Postnikov and studied by Athanasiadis \cite{AthanasiadisNonnesting}.
We refer the reader to Armstrong's memoir \cite[Chapter 5]{ArmstrongMemoir} for a nice exposition regarding nonnesting partitions.

\begin{definition}
For $r\in \mathbb{Z}_{\geq 1}$, let $\nP:=\{P_1,P_2,\ldots,P_m\}$ denote a partition of the set $[r]$.
We say that two blocks $P_i\neq P_j$ \emph{nest} if there exist $1\leq a<b<c<d\leq r$ with $\{a,d\}\subseteq P_i$ and $\{b,c\}\subseteq P_j$ and there does not exist $e\in P_i$ with $b<e<c$.
If no pair of the blocks of $\nP$ nest, we say that $\nP$ is a \emph{nonnesting partition} of $[r]$.
\end{definition}

\begin{example}
The partition $\{\{1,4\},\{2,5,6\},\{3\}\}$ of $[6]$ is a nonnesting partition, while the partition $\{\{1,3,5\},\{2,6\},\{4\}\}$ is not since the first two blocks nest.
\end{example}

The number of nonnesting partitions of $[r]$ is given by the Catalan number $\frac{1}{r}\binom{2r}{r-1}$.
It is convenient to represent a nonnesting partition as a graph as follows.

\begin{definition}
Let $\nP$ be a partition of $[r]$ and let $P_i=\{i_1,i_2,\ldots,i_k\}$ denote a block of $\nP$.
The \emph{arc diagram} $G_\nP$ of $\nP$ is the graph on vertex set $[r]$ whose edges are given by $\{i_j,i_{j+1}\}$ for consecutive elements of $P_i$, taken over all blocks of $\nP$.
\end{definition}

The name ``arc diagram'' comes from the picture of this graph obtained when the elements of $[r]$ are placed in a line and the edges of the diagram are drawn as upper semicircular arcs, as in Figure~\ref{bumpfig}.
The idea of using an arc diagram goes back to Postnikov's connection between nonnesting partitions and antichains in type $A$ root posets, though there is not a standard term for such diagrams in the literature.
In this representation, a partition is nonnesting exactly when no arc is nested below another.

\begin{figure}[ht]
\begin{center}
\includegraphics{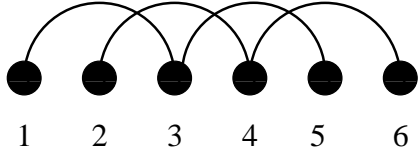}
\end{center}
\caption{The arc diagram for $\{\{1,3,5\},\{2,4,6\}\}$.}
\label{bumpfig}
\end{figure}

To connect nonnesting partitions to monomial ideals, we observe that a partition of $[r]$ may be used to define a family of monomial ideals as follows.

\begin{definition}
Let $\nP$ be a nonnesting partition of $[r]$ and $n\in \mathbb{Z}_{\geq 1}$.
The \emph{nonnesting monomial ideal restricted by $\nP$}, denoted $I_\nP(n) \subseteq \field[x_1,\ldots,x_n]$, is the unique (if it exists) monomial ideal whose minimal generators are degree $r$ monomials $x_{i_1}x_{i_2} \cdots x_{i_r}$, where $i_1\leq i_2 \leq\cdots\leq i_r$, such that if $a$ and $b$ are in the same block of $\nP$, then $x_{i_a}\neq x_{i_b}$.
\end{definition}

A nonnesting monomial ideal is thus a monomial ideal whose generators are all monomials of fixed degree that restrict to squarefree monomials on the blocks of $\nP$, when the variables for the generators are listed by increasing subscripts.
For the partition of $[r]$ into single-element sets, $I_\nP(n)$ is $\mathfrak{m}^r$, the $r$-th power of the maximal ideal $\mathfrak{m}=(x_1,x_2,\ldots,x_n)\subseteq \field[x_1,\ldots,x_n]$.
For the partition of $[r]$ as a single set $[r]$, $I_\nP(n)$ is generated by the squarefree generators of $\mathfrak{m}^r$.
One may view the family of nonnesting monomial ideals as interpolating between these two well-studied nonnesting monomial ideals.

Our goal in this section is to illustrate how minimal cellular resolutions of nonnesting monomial ideals arise as homomorphism complexes.
The following proposition reveals that $I_\nP(n)$ is defined by homomorphisms from $G_\nP$ to $K_n$.

\begin{proposition}
The nonnesting monomial ideal $I_\nP(n)$ is equal to the ordered homomorphism ideal $I_{G_\nP,K_n,ord}$.
\end{proposition}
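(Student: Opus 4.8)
The plan is to show that the two ideals have exactly the same set of minimal generators by matching monomials with ordered simplicial homomorphisms $G_\nP \to K_n$. The complete graph $K_n$ has vertex set $[n]$ and contains every edge, so its faces are $\emptyset$, all singletons, and all two-element subsets of $[n]$. An ordered simplicial homomorphism $\phi:[r]\to[n]$ from $G_\nP$ to $K_n$ is therefore precisely a weakly order-preserving map $\phi$ such that $\phi$ is injective on each edge of $G_\nP$ (the non-degeneracy condition is vacuous on vertices and singletons, and on edges it says $\phi(i)\neq\phi(j)$). To such a $\phi$ we associate the monomial $\prod_{i\in[r]} x_{\phi(i)}$, which by weak monotonicity can be written as $x_{i_1}x_{i_2}\cdots x_{i_r}$ with $i_1\le i_2\le\cdots\le i_r$.

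First I would verify the generator-set equality directly. Given $\phi$ as above, the associated monomial has degree $r$, its subscripts are weakly increasing, and injectivity of $\phi$ on edges of $G_\nP$ translates into the condition that $x_{\phi(a)}\neq x_{\phi(b)}$ whenever $\{a,b\}$ is an edge of $G_\nP$. The key combinatorial observation is that an edge $\{i_j,i_{j+1}\}$ between consecutive elements of a block, together with weak monotonicity of $\phi$, forces $\phi$ to be injective on the \emph{entire} block $P_i$: if $a<b$ lie in the same block with no block-element strictly between two consecutive ones at which the images agree, then monotonicity propagates equality, so $\phi(a)=\phi(b)$ would force $\phi$ constant on the consecutive pair spanning them, contradicting edge-injectivity. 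Hence $\phi$ restricts to a squarefree assignment on each block, which is exactly the defining condition for a generator of $I_\nP(n)$. Conversely, any degree-$r$ monomial $x_{i_1}\cdots x_{i_r}$ with $i_1\le\cdots\le i_r$ that is squarefree on each block of $\nP$ determines the map $\phi(j):=i_j$, which is weakly order preserving by construction and injective on each edge $\{i_j,i_{j+1}\}$ of $G_\nP$ because those lie in a common block; thus $\phi$ is an ordered simplicial homomorphism $G_\nP\to K_n$. This gives a bijection between the two generating sets, and since both ideals are by definition generated (minimally, when the ideal exists) by these monomials, the ideals coincide.

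The one genuine subtlety — and the step I expect to require the most care — is the interaction between the monotonicity hypothesis on $\phi$ and the ``no $e\in P_i$ with $b<e<c$'' clause buried in the definition of nesting, together with the fact that $G_\nP$ records only \emph{consecutive} elements of a block rather than all pairs. One must check that the block-injectivity of $\phi$ really is equivalent to edge-injectivity on $G_\nP$ and not something weaker; the argument is the monotonicity-propagation sketched above, but writing it cleanly requires being explicit that ``consecutive in $P_i$'' elements are adjacent in the arc diagram and that $\phi$ weakly increasing plus $\phi(i_j)=\phi(i_{j+1})$ would collapse the edge. A second minor point is the parenthetical ``if it exists'' in the definition of $I_\nP(n)$: I would note that the bijection above shows the candidate generating set is automatically an antichain under divisibility (two distinct weakly-increasing monomials of the same degree $r$ never divide one another), so the nonnesting monomial ideal always exists and its stated minimal generators are genuinely minimal, matching the minimal generators of $I_{G_\nP,K_n,ord}$.
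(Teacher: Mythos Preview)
Your proposal is correct and follows essentially the same approach as the paper: both directions match generators with ordered homomorphisms via $\phi(j)=i_j$, and the nontrivial direction (edge-injectivity on $G_\nP$ implies block-injectivity) is handled by exactly the monotonicity argument you sketch, which the paper phrases as ``on each path in $G_\nP$, the set of vertices of the path are sent to an increasing set of distinct elements of $[n]$.'' Your additional remarks on minimality and the ``if it exists'' clause go slightly beyond what the paper records but are correct and do not change the method.
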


\begin{proof}
Let $x=x_{i_1}x_{i_2} \cdots x_{i_r}$, where $i_1\leq i_2 \leq\cdots\leq i_r$, be a generator of $I_\nP(n)$.
Consider the function $\phi_x:[r]\rightarrow [n]$ given by $\phi_x(j)=i_j$.
That $\phi_x$ preserves edges follows immediately from the definition, since if $x_{i_a}\neq x_{i_b}$, then $i_a\neq i_b$.
That $\phi_x$ is order-perserving is also immediate from the stated form of $x$.
Thus, $\phi_x$ is a nondegenerate ordered simplicial homomorphism from $G_\nP$ to $K_n$.

Let $\phi$ be such a simplicial homomorphism, and we will show that the monomial $x_\phi$ satisfies the restrictions given by $\nP$.
Each connected component of $G_\nP$ is a path whose vertices are the elements of a block of $\nP$.
As $\phi$ is order-preserving and edge-preserving, we know that on each path in $G_\nP$, the set of vertices of the path are sent to an increasing set of distinct elements of $[n]$, hence to a set of distinct indices of variables in $x_\phi$.
This verifies that on each block of $\nP$, the variables in the positions specified by that block are distinct.
\end{proof}

Arc diagrams of nonnesting partitions are the only graphs needed when studying $I_{G,K_n,ord}$ for an arbitrary graph $G$, as the following proposition shows.

\begin{proposition}\label{reducedgraph}
For any graph $G$ on vertex set $[r]$, there exists a unique nonnesting partition $\nP$ of $[r]$ such that $G_\nP$ is a subgraph of $G$ and $I_{G,K_n,ord}=I_{G_\nP,K_n,ord}$.
We call $G_\nP$ the \emph{reduced graph} for $G$.
\end{proposition}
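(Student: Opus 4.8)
The plan is to construct $\nP$ explicitly from $G$ by a ``saturation'' procedure on edges, then verify the two required properties: that $I_{G,K_n,ord} = I_{G_\nP,K_n,ord}$ and that $\nP$ is the unique nonnesting partition with $G_\nP \subseteq G$ having this property. The key observation driving the construction is that the monomial $x_\phi = \prod_{j} x_{\phi(j)}$ attached to an ordered simplicial homomorphism $\phi\colon [r]\to K_n$ depends only on which pairs $\{a,b\}$ with $a<b$ are required to satisfy $\phi(a) \neq \phi(b)$; since $\phi$ is weakly order preserving, the constraint $\phi(a)\neq\phi(b)$ is equivalent to $\phi(a') \neq \phi(b')$ for \emph{every} $a\le a' < b' \le b$ that we also declare an edge. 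So adding to $G$ any edge $\{a',b'\}$ ``between'' the endpoints of an existing edge $\{a,b\}$ does not change the set of admissible homomorphisms, hence does not change the ideal.

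First I would make precise the closure operation: say $G$ is \emph{saturated} if whenever $\{a,b\}\in G$ with $a<b$ and $a \le c < b$, then $\{a,c\}$ and $\{c,b\}$ lie in $G$ (equivalently, the transitive-closure/interval condition). Starting from $G$, repeatedly adding all such ``nested'' edges terminates (the edge set is finite and only grows inside $\binom{[r]}{2}$) at a unique saturated graph $\widetilde G \supseteq G$, and by the observation above $I_{G,K_n,ord} = I_{\widetilde G,K_n,ord}$. Next I would show that a saturated graph is exactly the arc diagram of a partition: define $a \sim b$ iff $a=b$ or $\{a,b\}\in\widetilde G$; saturation forces $\sim$ to be transitive (if $a<b<c$ with $\{a,b\},\{b,c\}\in\widetilde G$, then examining which of $a,c$ is the relevant endpoint and applying the interval condition produces $\{a,c\}$), so $\sim$ is an equivalence relation whose classes are the blocks $P_i$ of a partition $\nP$. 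Moreover each block, being an interval-closed clique in $\widetilde G$ restricted to the edges forced by consecutivity, recovers $\widetilde G$ as $G_\nP$: the arc diagram keeps only consecutive-element edges of a block, but its saturation fills in exactly the complete graph on each block, and conversely $\widetilde G$ restricted to a block is complete, so $G_\nP \subseteq \widetilde G$ and the saturation of $G_\nP$ equals $\widetilde G$, giving $I_{G_\nP,K_n,ord} = I_{\widetilde G,K_n,ord} = I_{G,K_n,ord}$.

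It then remains to check that $\nP$ is nonnesting and that it is the unique nonnesting partition with the stated properties. For nonnesting: suppose blocks $P_i \ne P_j$ nest, so $a<b<c<d$ with $a,d\in P_i$, $b,c\in P_j$, and no element of $P_i$ lies strictly between $b$ and $c$. But $\{a,d\}$ is an edge of $\widetilde G$ (clique on $P_i$), and saturation of $\{a,d\}$ via the interior point $b$ forces $\{a,b\}$ and $\{b,d\}$ into $\widetilde G$; applying saturation again to $\{b,d\}$ via $c$ forces $\{b,c\}$ — fine, that's consistent — but the point is that saturating $\{a,d\}$ forces $\{a,c\}$ as well, and then $c \in P_i$ with $b < c$, and chasing a further interior point forces some element of $P_i$ strictly between $b$ and $c$ unless $c$ is consecutive, ultimately contradicting the no-$P_i$-element-between-$b$-and-$c$ hypothesis; I would make this last step airtight by arguing directly that interval-closed cliques (which is what blocks of $\widetilde G$ are) cannot nest. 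For uniqueness: if $\nQ$ is any nonnesting partition with $G_\nQ \subseteq G$ and $I_{G_\nQ,K_n,ord} = I_{G,K_n,ord}$, then since for a nonnesting partition the arc diagram is already almost-saturated (a nonnesting partition's arc diagram saturates to the complete-graph-on-blocks, with no cross-block edges created — this is precisely where nonnestingness is used), the saturation of $G_\nQ$ is the complete graph on the blocks of $\nQ$; but equal ideals force equal saturations (two saturated graphs giving the same homomorphism ideal into $K_n$ for, say, $n$ large must coincide, since the generators of degree-$r$ squarefree-on-each-constraint monomials determine the constraint graph), so $\nQ$ has the same blocks as $\nP$.

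\textbf{Main obstacle.} The step I expect to require the most care is proving that nested blocks are incompatible with interval-closure — i.e., pinning down exactly why a saturated graph's connected components, viewed as a partition, are automatically nonnesting, and dually why a nonnesting partition's arc diagram saturates without creating spurious edges between blocks. Both directions hinge on a clean lemma characterizing saturated graphs as disjoint unions of ``interval cliques'' $K_{P_1} \sqcup \cdots \sqcup K_{P_m}$ where the blocks $P_i$ are pairwise \emph{non-nesting} intervals-with-gaps; once that structural lemma is in hand, everything else (termination, uniqueness, the ideal equality) is bookkeeping. A secondary subtlety is the uniqueness argument's reliance on recovering the constraint graph from the ideal, which needs $n$ large enough (or a direct combinatorial argument) so that distinct saturated graphs on $[r]$ yield distinct generating sets — worth stating carefully but not hard.
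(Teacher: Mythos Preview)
Your key observation is backwards, and this breaks the construction. You assert that for weakly order-preserving $\phi$ and $a\le a'<b'\le b$, the constraint $\phi(a)\ne\phi(b)$ is equivalent to $\phi(a')\ne\phi(b')$. Only one direction holds: $\phi(a')\ne\phi(b')$ forces $\phi(a)\le\phi(a')<\phi(b')\le\phi(b)$, hence $\phi(a)\ne\phi(b)$; the converse fails. Concretely, take $G$ on $[3]$ with the single edge $\{1,3\}$. Your saturation adds $\{1,2\}$ and $\{2,3\}$, producing $\widetilde G=K_3$ and the partition $\nP=\{\{1,2,3\}\}$. Then $G_\nP$ has edges $\{1,2\}$ and $\{2,3\}$, neither of which lies in $G$, so $G_\nP\not\subseteq G$; and $x_1^2x_2\in I_{G,K_n,ord}$ (via $\phi=(1,1,2)$) while $x_1^2x_2\notin I_{G_\nP,K_n,ord}$, so the ideals differ. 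The correct reduced graph here is $G$ itself, already the arc diagram of the nonnesting partition $\{\{1,3\},\{2\}\}$. (Your transitivity step is also unsupported: from $\{a,b\},\{b,c\}\in\widetilde G$ with $a<b<c$, your interval rule does not produce $\{a,c\}$, so the blocks of $\widetilde G$ need not even form a partition in your sense.)

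The paper's argument runs in the opposite direction: rather than adding inner edges, it \emph{removes outer} ones. If $\{a,d\},\{b,c\}\in G$ with $a<b<c<d$, then the constraint from $\{a,d\}$ is implied by that from $\{b,c\}$ (this is the valid implication above), so deleting $\{a,d\}$ leaves the ideal unchanged while strictly decreasing $|E(G)|$; induction on the edge count terminates at a subgraph $G_\nP\subseteq G$ with no nested pair of edges, i.e.\ the arc diagram of a nonnesting partition. If you reverse your closure---discarding any edge that strictly contains another edge of $G$---you recover essentially the paper's procedure, and the subgraph condition $G_\nP\subseteq G$ comes for free.
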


\begin{proof}
We induct on $|E(G)|$.  The result is clear when $|E(G)| \leq 1$, so suppose that $|E(G)| \geq 2$.  
If $G$ is already nonnesting, we set $G = G_{\mathcal{P}}$; otherwise, suppose there are vertices $a<b<c<d$ for which $\{a,d\}, \{b,c\} \in E(G)$.

Consider the graph $G'$ obtained from $G$ by removing the edge $\{a,d\}$.  
We claim that $I_{G,K_n,ord} = I_{G',K_n,ord}$.  
Indeed, suppose $\phi: G' \rightarrow K_n$ is an ordered simplicial homomorphism.  
It follows that $\phi(a) \leq \phi(b) < \phi(c) \leq \phi(d)$, and hence $\phi(a) < \phi(d)$.  
Thus $\phi$ extends to an ordered simplicial homomorphism from $G$ to $K_n$.  
Similarly, any ordered simplicial homomorphism $\psi: G \rightarrow K_n$ restricts to an ordered simplicial homomorphism from $G'$ to $K_n$, proving the claim.

By our inductive hypothesis, there is a nonnesting partition $\mathcal{P}$ for which $I_{G',K_n,ord} = I_{G_{\mathcal{P}},K_n,ord}$.  
Hence $I_{G,K_n,ord} = I_{G_{\mathcal{P}},K_n,ord}$ as well.
\end{proof}

It follows immediately from the techniques developed in Section~\ref{resolutions} that nonnesting monomial ideals admit minimal prodsimplicial resolutions.
It is natural to investigate algebraic invariants of $I_{G_\nP,K_n,ord}$; the rest of this section regards the Betti numbers of $I_{G_\nP,K_n,ord}$.
We begin by establishing that the nonnesting partitions on $[r]$ admit a natural partial order.
We then proceed to describe how the Betti numbers of $I_{G_\nP,K_n,ord}$ might be computed using this poset.
The poset $\nD_r$ that we define below arises in various guises for different Catalan-enumerated families \cite{BarcucciEtAl,CautisJackson,FerrariPinzani,SapounakisEtAl}.
In particular, $\nD_r$ is a distributive lattice that is isomorphic to the lattice of $312$-avoiding permutations in $\mathfrak{S}_r$ under weak Bruhat order, or equivalently the lattice of Dyck paths, i.e. lattice paths from $(0,0)$ to $(r,r)$ using North and East steps that do not drop below the line $x=y$, under the order given by geometric inclusion of paths.

\begin{definition}
The \emph{$r$-th diagram poset}, denoted $\nD_r$, is the poset whose elements are arc diagrams of nonnesting partitions of $[r]$ partially ordered by $\nP\leq\nQ$ if every arc in $\nQ$ lies above an arc in $\nP$.
\end{definition}

It is straightforward to check that the minimal element of $\nD_r$ is the full path on $[r]$ and the maximal element is the empty graph.
It is clear from the definition of minimal cellular resolution that the number of $k$-dimensional faces in $\ohom(G,H)$ is the $k$-th Betti number of $I_{G,H,ord}$ when $H$ is cointerval.
With this in mind, the importance of this poset is illustrated by the following.

\begin{proposition}
If $\nP\leq \nQ$ in $\nD_r$, then 
\[\ohom(G_\nP,K_n)\subseteq\ohom(G_\nQ,K_n) \, .\]
Further, for each face $\tau$ in $\ohom(G_{e},K_n)$, where $G_e$ denotes the empty graph, the upper order ideal $U(\tau)$ in $\nD_r$ of arc diagrams whose $\ohom$ complexes contain $\tau$ is principal.
\end{proposition}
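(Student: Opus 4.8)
The plan is to prove the two assertions separately, using the description of cells of $\ohom(G_\nP, K_n)$ as tuples $\tau = (W_i)_{i\in[r]}$ with $\emptyset\neq W_i\subseteq[n]$ such that every order-preserving selection $\psi(i)\in W_i$ maps edges of $G_\nP$ to distinct pairs. For the first claim, suppose $\nP\leq\nQ$ in $\nD_r$, so that every edge $\{b,c\}$ of $G_\nQ$ lies above some edge $\{a,d\}$ of $G_\nP$, meaning $a\leq b<c\leq d$. First I would take a cell $\tau=(W_i)_{i\in[r]}\in\ohom(G_\nP,K_n)$ and show it lies in $\ohom(G_\nQ,K_n)$; concretely, given any order-preserving selection $\psi(i)\in W_i$, I must check $\psi$ is edge-preserving for $G_\nQ$. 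For each edge $\{b,c\}$ of $G_\nQ$, pick the edge $\{a,d\}$ of $G_\nP$ lying above it; since $\psi$ preserves the $G_\nP$-edge, $\psi(a)\neq\psi(d)$, and order-preservation gives $\psi(a)\leq\psi(b)<\psi(c)\leq\psi(d)$ once we know $\psi(b)\neq\psi(c)$—but in fact the strict inequality we want, $\psi(b)<\psi(c)$, follows directly because $\psi(a)<\psi(d)$ forces a strict increase somewhere in the chain, and order-preservation combined with the nesting position forces it to be between $b$ and $c$. (This is exactly the argument appearing in the proof of Proposition~\ref{reducedgraph}.) Hence $\tau\in\ohom(G_\nQ,K_n)$, giving the containment.

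For the second claim, fix a face $\tau=(W_i)_{i\in[r]}$ of $\ohom(G_e,K_n)$, which (since $G_e$ has no edges) is just an arbitrary tuple of nonempty subsets of $[n]$. I would describe $U(\tau)=\{\nP\in\nD_r : \tau\in\ohom(G_\nP,K_n)\}$ and show it has a unique minimal element. The condition ``$\tau\in\ohom(G_\nP,K_n)$'' translates to: for every edge $\{i,j\}$ ($i<j$) of $G_\nP$ and every order-preserving selection $\psi$ with $\psi(i)\in W_i$, $\psi(j)\in W_j$, we have $\psi(i)<\psi(j)$—equivalently $\max(W_i)<$ (the minimum value $\psi(j)$ can take subject to $\psi(j)\geq\psi(i)$), but most cleanly: the pair $(i,j)$ is ``separated by $\tau$'' iff there is no order-preserving selection with $\psi(i)=\psi(j)$, iff $\min(W_j) > \max(W_i)$ is false to fail separation... so I define $i\sim_\tau j$ to mean the pair $(i,j)$ is separated, and observe this is precisely the set of pairs that any $G_\nP$ with $\tau\in\ohom(G_\nP,K_n)$ is allowed to have as edges. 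Then $U(\tau)$ is the set of nonnesting partitions $\nP$ all of whose arcs are separated pairs, ordered by $\nD_r$; I want to exhibit its minimum.

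The key step—and the main obstacle—is constructing the minimal element of $U(\tau)$ and verifying it is genuinely a lower bound in $\nD_r$, not merely a minimal element. The natural candidate is the ``lowest'' nonnesting partition whose arcs all lie among the separated pairs: process the separated pairs greedily or take the reduced graph (in the sense of Proposition~\ref{reducedgraph}) of the graph on $[r]$ with edge set $\{\,\{i,j\} : i\sim_\tau j\,\}$. I would first check that $\tau$ lies in $\ohom$ of this full graph $G$ on all separated pairs (immediate from the definition of $\sim_\tau$), hence in $\ohom(G_\nP,K_n)$ for its reduced nonnesting partition $\nP$ by Proposition~\ref{reducedgraph}; so $\nP\in U(\tau)$. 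The harder direction is: for any $\nQ\in U(\tau)$, show $\nP\leq\nQ$, i.e. every arc of $\nQ$ lies above an arc of $\nP$. Since every arc $\{b,c\}$ of $\nQ$ is a separated pair, it is an edge of $G$; one must then argue, using the structure of the reduction process in Proposition~\ref{reducedgraph} (which only ever replaces a nesting edge $\{a,d\}$ by keeping the nested edge $\{b,c\}$ below it), that every edge of $G$ lies above some edge of $G_\nP$—this requires checking that the reduction doesn't destroy all ``support'' below a given edge, which should follow by induction on $|E(G)|$ tracking that each removed edge $\{a,d\}$ leaves behind $\{b,c\}$ with $a\leq b<c\leq d$, so the ``lies above'' relation is preserved transitively. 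Once that is established, $\nP$ is the minimum of $U(\tau)$ and the upper order ideal is principal, completing the proof.
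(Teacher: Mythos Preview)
Your plan for the second claim matches the paper's proof: form the graph $G$ on $[r]$ whose edges are exactly the pairs separated by $\tau$ (this is the same graph as the union of all $G_\nQ$ with $\nQ\in U(\tau)$, since a single arc $\{i,j\}$ is itself a nonnesting diagram), reduce $G$ to its nonnesting arc diagram $G_\nP$ via Proposition~\ref{reducedgraph}, and then observe that the reduction only ever deletes an outer arc while retaining a nested one, so inductively every edge of $G$ lies above some edge of $G_\nP$. Since each $G_\nQ$ with $\nQ\in U(\tau)$ is a subgraph of $G$, this gives $\nP\le\nQ$, and $\nP\in U(\tau)$ because $\ohom(G,K_n)=\ohom(G_\nP,K_n)$.

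Your argument for the first claim, however, has the nesting reversed, and the justification you give does not work as written. You say that $\{b,c\}\in G_\nQ$ lies above $\{a,d\}\in G_\nP$ ``meaning $a\le b<c\le d$''; but that inequality makes $\{a,d\}$ the \emph{outer} arc, i.e.\ $\{a,d\}$ lies above $\{b,c\}$, not the other way around. With your inequality, knowing $\psi(a)<\psi(d)$ does \emph{not} force $\psi(b)<\psi(c)$: the strict jump could occur entirely between $a$ and $b$ or between $c$ and $d$ (take $r=4$, $G_\nP=\{\{1,4\}\}$, $\psi=(1,2,2,3)$). The correct unpacking of $\nP\le\nQ$ is that for each $G_\nQ$-edge $\{b,c\}$ (with $b<c$) there is a $G_\nP$-edge $\{a,d\}$ with $b\le a<d\le c$; then $\psi(b)\le\psi(a)<\psi(d)\le\psi(c)$ gives the desired strict inequality directly. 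This is indeed the mechanism from Proposition~\ref{reducedgraph}, where the \emph{outer} edge is the redundant one implied by the inner edge. Once the inequality is corrected, your proof of the first claim coincides with the paper's.
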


\begin{proof}
Let $\tau = (\tau_1,\ldots,\tau_r)$ be a cell in $\ohom(G_{\nP},K_n)$ so that each choice $\varphi(i) \in \tau_i$ yields an ordered homomorphism from $G_{\nP}$ to $K_n$. 
We need to show that each such choice yields an ordered homomorphism from $G_{\nQ}$ to $K_n$ as well.

For each arc $\{a<d\} \in G_\nP$, there is an arc $\{b<c\} \in G_\nQ$ with $a \leq b < c \leq d$.  
Since $\varphi$ is an ordered homomorphism from $G_{\nP}$ to $K_n$, we see that $\varphi(a) \leq \varphi(b) < \varphi(c) \leq \varphi(d)$.  
Thus $\varphi(a) < \varphi(d)$ for all arcs $\{a<d\} \in G_{\nQ}$, and hence $\varphi$ is an ordered homomorphism from $G_{\nQ}$ to $K_n$.  
Thus $\tau \in \ohom(G_{\nQ},K_n)$, as desired.

Suppose next that $\tau$ is a face of $\ohom(G_e,K_n)$.  
Consider the graph $G$ obtained as the union of all reduced graphs $G_{\nQ}$ such that $\tau$ is a face of $\ohom(G_{\nQ},K_n)$.  
By Proposition \ref{reducedgraph}, there is a unique nonnesting partition $\nP$ such that $G_\nP$ is a subgraph of $G$ and $\ohom(G,K_n) = \ohom(G_{\nP},K_n)$.  
Since $G_{\nP}$ is obtained from $G$ by removing nesting arcs, it follows that $\nP \leq \nQ$ for all nonnesting partitions $\nQ$ whose $\ohom$ complexes contain $\tau$.  
Thus the upper order ideal $U(\tau)$ in $\nD_r$ is generated by $\nQ$.
\end{proof}

We next show that one may compute the Betti numbers of $I_\nP(n)$ via weighted sums ranging over subsets of the elements of $\nD_r$.

\begin{definition}
For $\nP$ a nonnesting partition, define the \emph{$k$-th weight} of $\nP$ for $n$, denoted $\omega_k(\nP,n)$, to be the number of $k$-dimensional faces $\tau$ of $\ohom(G_e,K_n)$ such that $\nP$ is the generator of $U(\tau)$.
\end{definition}

Weights are thus the number of faces that are determined ``minimally'' by $\nP$ in the $\ohom$ complex.
The following connection between the $k$-th weights of $\nP$ for $n$ and the Betti numbers of $I_\nP(n)$ follows immediately from the definitions.

\begin{proposition}
\[\beta_k(I_\nQ(n))=\sum_{\nP\leq \nQ}\omega_k(\nP,n) \, .\]
\end{proposition}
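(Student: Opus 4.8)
The plan is to combine the two preceding propositions with the observation, recorded just before the penultimate proposition, that the $k$-th Betti number of an ordered homomorphism ideal with cointerval target equals the number of $k$-dimensional faces of the associated $\ohom$ complex. First I would note that $K_n$ is a complete multipartite graph, hence a cointerval complex (cf.\ the remark closing Section~\ref{properties} on rank-two matroid complexes), so Corollary~\ref{maincor}, applied with $x^\beta$ taken to be the revlex-smallest generator of $I_\nQ(n)=I_{G_\nQ,K_n,ord}$, shows that $\ohom(G_\nQ,K_n)$ supports a \emph{minimal} cellular resolution of $I_\nQ(n)$. The minimality criterion stated after Proposition~\ref{acyclic} is automatic here, since a strict inclusion $\sigma\subsetneq\tau$ of cells forces some coordinate set $W_i$ to grow strictly and hence introduces a new variable into the label. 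Therefore $\beta_k(I_\nQ(n))$ equals the number of $k$-dimensional faces of $\ohom(G_\nQ,K_n)$.

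Next I would use that the empty graph $G_e$ is the maximal element of $\nD_r$: by the previous proposition this gives $\ohom(G_\nQ,K_n)\subseteq \ohom(G_e,K_n)$, so every $k$-dimensional face $\tau$ of $\ohom(G_\nQ,K_n)$ is in particular a $k$-dimensional face of $\ohom(G_e,K_n)$. For such a $\tau$ let $\nP(\tau)$ denote the generator of the principal upper order ideal $U(\tau)$ provided by the previous proposition. By the very definition of $U(\tau)$ as the set of arc diagrams whose $\ohom$ complexes contain $\tau$, we have the equivalences
\[
\tau\in \ohom(G_\nQ,K_n)\iff \nQ\in U(\tau)\iff \nP(\tau)\leq \nQ .
\]
Thus the $k$-dimensional faces of $\ohom(G_\nQ,K_n)$ are exactly those $k$-dimensional faces $\tau$ of $\ohom(G_e,K_n)$ with $\nP(\tau)\leq\nQ$.

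Finally I would partition this set of faces according to the value $\nP(\tau)\in\nD_r$. For a fixed $\nP\leq\nQ$, the block consisting of $k$-dimensional faces $\tau$ of $\ohom(G_e,K_n)$ with $\nP(\tau)=\nP$ has cardinality $\omega_k(\nP,n)$ by the definition of the $k$-th weight. Summing the block sizes over all $\nP\leq\nQ$ and invoking the first step yields $\beta_k(I_\nQ(n))=\sum_{\nP\leq\nQ}\omega_k(\nP,n)$, as claimed. The argument is essentially bookkeeping; the only points meriting care are the verification that $K_n$ is cointerval (so that the Betti-number/face-count dictionary is valid) and the recognition that the equivalence $\tau\in\ohom(G_\nQ,K_n)\iff \nP(\tau)\leq\nQ$ is literally the content of the previous proposition rather than something requiring a fresh argument. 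I do not anticipate any serious obstacle.
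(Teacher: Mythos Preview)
Your argument is correct and is precisely the unpacking of what the paper means when it says the formula ``follows immediately from the definitions'': identify $\beta_k$ with the face count of $\ohom(G_\nQ,K_n)$, embed into $\ohom(G_e,K_n)$, and partition the $k$-faces by the generator of $U(\tau)$. One minor quibble: when you justify minimality you say a strict inclusion ``introduces a new variable into the label,'' but more precisely it raises the degree of some $x_j$ (the variable may already be present); this still forces $x_\sigma\neq x_\tau$, and in any case Corollary~\ref{maincor} already asserts minimality, so no separate check is needed.
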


From the M\"{o}bius inversion formula \cite[Chapter 3]{StanleyVol1} it follows that the $k$-th weights and the $k$-th Betti numbers of all nonnesting partitions of $[r]$ determine each other uniquely.
As the following theorem demonstrates, the $k$-th weights of $\nP$ are non-zero for a restricted class of nonnesting partitions.

\begin{theorem}\label{zeroweight}
If $G_\nP$ contains an arc from $i$ to $j$ where $j-i>2$, then $\omega_k(\nP,n)=0$.
\end{theorem}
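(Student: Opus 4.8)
The plan is to show, under the hypothesis, that \emph{no} face $\tau$ of $\ohom(G_e,K_n)$ has $\nP$ as the generator of $U(\tau)$; since $\omega_k(\nP,n)$ counts the $k$-dimensional such faces, this gives $\omega_k(\nP,n)=0$ for every $k$. First I would record the description of the cells of $\ohom(G,K_n)$ for a graph $G$ on $[r]$: a multihomomorphism $\sigma=(\sigma_1,\dots,\sigma_r)$ lies in $\ohom(G,K_n)$ if and only if $\max\sigma_\ell\le\min\sigma_{\ell+1}$ for all $\ell$ (these ``path conditions'' are forced by order-preservation of every selection and hold automatically for any cell of $\ohom(G_e,K_n)$) together with $\max\sigma_a<\min\sigma_b$ for every arc $\{a,b\}$ of $G$ with $a<b$ (equivalent, given the path conditions, to $\sigma_a\cap\sigma_b=\emptyset$, which is forced by non-degeneracy). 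Call an arc $\{a,b\}$, $a<b$, \emph{$\tau$-compatible} if $\max\tau_a<\min\tau_b$; then for $\tau$ satisfying the path conditions, $\tau\in\ohom(G,K_n)$ exactly when every arc of $G$ is $\tau$-compatible. Finally recall that the generator of $U(\tau)$ is the minimum of the upper ideal $\{\nQ:\tau\in\ohom(G_\nQ,K_n)\}$, so if $\nP$ is this generator then $\tau\in\ohom(G_\nP,K_n)$ and $\nP\le\nQ$ for every nonnesting partition $\nQ$ with $\tau\in\ohom(G_\nQ,K_n)$.

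So suppose $\nP$ generates $U(\tau)$ and $\{i,j\}\in G_\nP$ with $j\ge i+3$. Since $\tau\in\ohom(G_\nP,K_n)$, the arc $\{i,j\}$ is $\tau$-compatible, i.e.\ $\max\tau_i<\min\tau_j$. The decisive step is to produce a pair $i\le b<c\le j$ with $(b,c)\ne(i,j)$ such that $\{b,c\}$ is $\tau$-compatible. Granting this, let $\nQ$ be the nonnesting partition of $[r]$ whose only block of size at least two is $\{b,c\}$; it is automatically nonnesting, and $G_\nQ$ is the single arc $\{b,c\}$, so $\tau\in\ohom(G_\nQ,K_n)$. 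I would then check that $\nP\not\le\nQ$: the relation $\nP\le\nQ$ would require the arc $\{b,c\}$ of $G_\nQ$ to lie above an arc of $G_\nP$, i.e.\ $G_\nP$ to contain an arc whose span lies inside $[b,c]\subseteq[i,j]$. But since $\{i,j\}\in G_\nP$ and $G_\nP$ is nonnesting, the only arc of $G_\nP$ with span inside $[i,j]$ is $\{i,j\}$ itself: an arc $\{i,c'\}$ with $c'<j$ or $\{b',j\}$ with $b'>i$ would contradict uniqueness of successors/predecessors inside a block of $\nP$, and an arc $\{b',c'\}$ with $i<b'<c'<j$ would nest below $\{i,j\}$. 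As $[i,j]$ is not contained in the proper subinterval $[b,c]$, no such arc exists, so $\nP\not\le\nQ$, contradicting that $\nP$ is a lower bound for $\{\nQ:\tau\in\ohom(G_\nQ,K_n)\}$.

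For the decisive step I would argue by contradiction, and this is where the hypothesis $j-i>2$ is used essentially. If no $\tau$-compatible pair as above exists, then for every $i\le b<c\le j$ with $(b,c)\ne(i,j)$ the path inequality $\max\tau_b\le\min\tau_c$ is an equality. Because $j\ge i+3$, the pairs $(i,i+1)$, $(i,i+2)$, $(i+1,i+2)$ and $(i+1,j)$ are all admissible (none equals $(i,j)$); the first three give $\min\tau_{i+1}=\max\tau_i=\min\tau_{i+2}$ and $\max\tau_{i+1}=\min\tau_{i+2}$, whence $\tau_{i+1}=\{\max\tau_i\}$ is a single vertex, and then $(i+1,j)$ gives $\max\tau_i=\max\tau_{i+1}=\min\tau_j$, contradicting $\max\tau_i<\min\tau_j$.

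The main obstacle is exactly this last point. One might hope to use a $\tau$-compatible \emph{adjacent} pair $\{\ell,\ell+1\}$ with $i\le\ell\le j-1$ (allowing an even simpler competitor $\nQ$), but such a pair need not exist: the strict inequality $\max\tau_i<\min\tau_j$ can be concentrated entirely in gaps $\min\tau_\ell<\max\tau_\ell$ coming from non-singleton coordinates, with every adjacent inequality $\max\tau_\ell\le\min\tau_{\ell+1}$ an equality — for instance $\tau=(\{1\},\{1,2\},\{2,3\},\{3\})$ for the arc $\{1,4\}$. Allowing a pair of span at least two circumvents this, and the short computation above shows a compatible such pair inside $[i,j]$ distinct from $\{i,j\}$ is forced to exist once $j-i\ge 3$; the remaining verifications — that the competitor $\nQ$ is a legitimate element of $\nD_r$ and is genuinely not $\ge\nP$ — are the elementary facts about arc diagrams of nonnesting partitions noted in the second paragraph.
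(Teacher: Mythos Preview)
Your proof is correct and shares the paper's key idea: locate a $\tau$-compatible arc $\{b,c\}$ with $i\le b<c\le j$ and $(b,c)\ne(i,j)$, then use it to contradict the assumption that $\nP$ generates $U(\tau)$. The paper argues (tersely) that $y:=\max\tau_i$ must lie in each $\tau_{i+s}$ for $1\le s\le j-i-1$, deduces $\tau_{j-2}=\{y\}$, and takes $\{j-2,j\}$ as the compatible arc; its competitor is the reduced graph of $G_\nP\cup\{\{j-2,j\}\}$, which lies strictly below $\nP$ in $\nD_r$. You instead obtain the existence of \emph{some} compatible pair by a short direct contradiction using four specific pairs, and you take as competitor the single-arc partition $\nQ$ with $G_\nQ=\{\{b,c\}\}$, verifying $\nP\not\le\nQ$ via the observation that no arc of $G_\nP$ other than $\{i,j\}$ has span contained in $[i,j]$. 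Your choice of competitor sidesteps any analysis of how the reduction procedure interacts with the remaining arcs of $G_\nP$; the paper's choice has the feature that its $\nQ$ is actually comparable to $\nP$ in $\nD_r$.
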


\begin{proof}
Suppose for contradiction that $\tau$ is a face of $\ohom(G_e,K_n)$ with $U(\tau)$ generated by $\nP$.
Suppose that $G_\nP$ contains an arc $\{i,j\}$ with $j-i>2$.
By definition, $\tau(i)\cap\tau(j)=\emptyset$.
If $y$ is the maximal element of $\tau(i)$, then $\tau(i)\cap\tau(i+s)=\{y\}$ for $s=1,\ldots,j-1$, since every homomorphism corresponding to a vertex in $\tau$ must preserve order.
It follows that $\tau(j-2)=\{y\}$.
Because $y\notin\tau(j)$, adding the edge $\{j-2,j\}$ to $G_\nP$ creates a new graph $G$ that reduces to an arc diagram $G_\nQ\in U(\tau)$ with $\nQ\leq \nP$.
This contradicts the minimality of $\nP$ as a generator of $U(\tau)$.
\end{proof}

We will call an arc diagram $G_\nP$ containing no arc of the form $\{i,j\}$ with $j-i>2$ a \emph{small arc diagram}.
The number of small arc diagrams on $[r]$ is an even-index Fibonacci number, as the following theorem shows.

\begin{theorem}
Let $F_{r}$ denote the Fibonacci numbers with $F_0=F_1=1$.
The number of small arc diagrams on $[r]$ is $F_{2r-2}$.
\end{theorem}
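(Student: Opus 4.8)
The statement to prove is that the number of small arc diagrams on $[r]$ equals the even-index Fibonacci number $F_{2r-2}$. A small arc diagram is an arc diagram of a nonnesting partition of $[r]$ containing no arc $\{i,j\}$ with $j-i>2$; that is, every arc is either a ``short'' arc $\{i,i+1\}$ or a ``medium'' arc $\{i,i+2\}$. My first move is to forget the partition structure entirely and describe small arc diagrams purely as graphs: a graph on $[r]$ all of whose edges are of the form $\{i,i+1\}$ or $\{i,i+2\}$, subject to the constraint that it actually \emph{is} the arc diagram of a nonnesting partition. The plan is to identify exactly which such graphs arise, set up a transfer-matrix (constant-coefficient linear recurrence) over the positions $1,2,\ldots,r$, and then verify that the recurrence produces $F_{2r-2}$.

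\textbf{Key steps.} First I would pin down the local obstructions. An arc diagram of a partition is a disjoint union of paths (one per block), so in particular no vertex can have degree $\geq 3$, and the arcs cannot form a cycle; the nonnesting condition further forbids one arc from lying strictly below another. For small arc diagrams this means: we never have both $\{i,i+2\}$ and $\{i+1,\cdot\}$ with the nested-below configuration, and we must avoid creating a vertex of degree three (e.g.\ $\{i-1,i+1\}$, $\{i,i+1\}$, $\{i+1,i+2\}$ all present) or a short cycle (e.g.\ $\{i,i+1\},\{i+1,i+2\},\{i,i+2\}$). So the second step is to enumerate the allowed ``states'' describing, as we scan $i=1,\ldots,r$, which of the edges touching the current boundary are present — effectively tracking whether $\{i-1,i\}$, $\{i-1,i+1\}$, $\{i,i+1\}$ are in the diagram and which endpoints already have degree two. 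Third, I would write down the transfer matrix $T$ on this finite state set, so that the count of small arc diagrams on $[r]$ is a fixed entry (or a fixed linear combination of entries) of $T^{r}$ applied to the initial state. Fourth, compute the characteristic polynomial of $T$ (after deleting unreachable/absorbing states it should be small, likely $2\times 2$ or $3\times 3$), and observe it matches the recurrence $a_r = 3a_{r-1} - a_{r-2}$ satisfied by $F_{2r-2}$ (since $F_{2m}=3F_{2m-2}-F_{2m-4}$, coming from $F_{2m}=F_{2m-1}+F_{2m-2}$ and $F_{2m-1}=3F_{2m-3}-F_{2m-5}$ as well). Finally, check the base cases: $r=1$ gives one (empty) diagram $=F_0=1$; $r=2$ gives two diagrams (empty, and the single arc $\{1,2\}$) $=F_2=2$; $r=3$ should give $F_4=5$, which I would confirm by direct enumeration to nail down the recurrence's initial conditions.

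\textbf{Main obstacle.} The technical heart — and where I expect to spend the most care — is the correct bookkeeping of states: precisely characterizing which small graphs are arc diagrams of nonnesting partitions, because the union-of-paths condition (no degree-three vertices, no cycles) and the nonnesting condition interact in a slightly subtle way for medium arcs $\{i,i+2\}$, which ``skip over'' vertex $i+1$. In particular I must be sure that a medium arc $\{i,i+2\}$ together with a short arc $\{i+1,i+2\}$ or $\{i,i+1\}$ is handled correctly (one of these configurations is forbidden as nesting, the other creates a degree-three vertex or a triangle), and that an isolated vertex $i+1$ under a medium arc is allowed. Once the state diagram is pinned down correctly, the rest is a routine transfer-matrix computation; an alternative, perhaps cleaner, route that I would also consider is to exhibit a direct bijection between small arc diagrams on $[r]$ and, say, tilings of a strip by monominoes and dominoes of two colors, or lattice paths counted by $F_{2r-2}$, which would sidestep the characteristic-polynomial computation. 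Either way, the combinatorial classification of small arc diagrams is the step that needs to be done carefully; everything after it is a finite check.
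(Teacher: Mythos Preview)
Your transfer-matrix plan is a genuinely different route from the paper's. The paper argues by a direct recursion on the leftmost vertex: if $\{1,2\}$ is an arc the rest is a small arc diagram on $[2,r]$, contributing $f(r-1)$; otherwise one takes the maximal $K\ge 2$ for which the staircase $\{1,3\},\{2,4\},\ldots,\{K-2,K\}$ is present while $\{K-1,K+1\}$ is absent, and the portion on $[K,r]$ is then unconstrained, contributing $f(r-K+1)$. Summing gives $f(r)=f(r-1)+\sum_{j=1}^{r-1}f(j)$, which telescopes to $f(r)=3f(r-1)-f(r-2)$ and matches $F_{2r-2}$. This is shorter than setting up a state machine; your approach, on the other hand, is more mechanical and would generalize more easily to variants.

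There is, however, a real gap in your characterization of which graphs are arc diagrams. You write that an arc diagram is a disjoint union of paths, and then translate this as ``no vertex of degree $\ge 3$ and no cycles,'' with nonnesting layered on top. That translation loses information: the paths in an arc diagram are \emph{monotone}, since the arcs of a block $\{i_1<\cdots<i_k\}$ are exactly the consecutive pairs $\{i_j,i_{j+1}\}$. Equivalently, every vertex has at most one arc to the left and at most one arc to the right. The graph with arcs $\{1,3\}$ and $\{2,3\}$ has maximum degree $2$, no cycle, and no nesting (no arc of length $\le 2$ can have anything nested strictly beneath it, so for small arc diagrams the nonnesting condition is vacuous), yet it is not the arc diagram of any partition: vertex $3$ has two left arcs. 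Your specific worry that ``$\{i,i+2\}$ together with $\{i+1,i+2\}$'' is forbidden by nesting is therefore misdiagnosed; it is forbidden because $i+2$ would acquire two left arcs. Once you replace your constraints with the correct one --- all arcs of length $1$ or $2$, and each vertex has at most one left arc and at most one right arc --- the transfer matrix collapses to two states (``is there a pending medium arc $\{i,i+2\}$?''), with matrix $\bigl(\begin{smallmatrix}2&1\\1&1\end{smallmatrix}\bigr)$ and characteristic polynomial $\lambda^2-3\lambda+1$, and your argument goes through.
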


\begin{proof}
Let $f(r)$ denote the number of small arc diagrams on $[r]$.
We proceed by induction on $r$, where the cases $r=1$ and $r=2$ are clear.
Assume that $f(r)=F_{2r-2}$ for all $r\leq n-1$.
To construct a small arc diagram $G$ on $[n]$, there are two possibilities: either $\{1,2\}$ is an edge or it isn't. 
If it is, there are $f(n-1)$ ways to fill in the graph on the vertices $[2,n]$.
If not, we add edges of the form $\{i,i+2\}$ until we reach an integer $K$ such that all edges $\{1,3\}, \{2,4\},\ldots,\{K-2,K\}$ are in $G$, but $\{K-1,K+1\}$ is not, where here $K\geq 2$.
Given such a graph, there are $f(n-K+1)$ ways to fill in the edges on $[K,n]$.
We thus obtain a recursion 
\[f(n) = f(n-1) + (f(n-1)+\cdots +f(1))=F_{2n-4}+(F_{2n-4}+\cdots+F_{0})=F_{2n-4}+F_{2n-3} \, ,\] 
which completes our proof.
\end{proof}

\begin{remark}
There are other Catalan-enumerated families with substructures enumerated by the even-indexed Fibonacci numbers, for example \cite[Corollary 4.4]{TennerPattern} the permutations in the symmetric group avoiding the patterns $321$ and $3412$.
\end{remark}

While in general it appears to be nontrivial to compute $\omega_k(\nP,n)$, the weights have clean formulas when $k=0$.

\begin{proposition}
Let $\nP$ be a nonnesting partition on $[r]$.
If $G_\nP$ contains an arc that is not of the form $\{i,i+1\}$ for some $i$, then $\omega_0(G_\nP,n)=0$.
If $G_\nP$ only has arcs given by consecutive elements of $[r]$, then
\[\omega_0(G_\nP,n)=\binom{n}{\hat{c}_\nP} \]
where $\hat{c}_\nP$ is the number of connected components of the complement of $G_\nP$ in the full path graph on $[r]$ with edges given by $\{i,i+1\}$ for all $i$.
\end{proposition}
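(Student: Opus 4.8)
The plan is to compute $\omega_0(G_\nP,n)$ directly from the description of $U(\tau)$ used in the proof that each $U(\tau)$ is principal. A vertex $\tau$ of $\ohom(G_e,K_n)$ is exactly a weakly order-preserving map $\phi\colon[r]\to[n]$, and recording the fibers of $\phi$ breaks $[r]$ into maximal intervals of constancy $J_1<J_2<\cdots<J_t$; write $r_p:=\max J_p$ and $E_\phi:=\{\{a,b\}:a<b,\ \phi(a)<\phi(b)\}$, so $E_\phi$ is precisely the set of pairs lying in distinct $J_p$'s. From the definition of ordered simplicial homomorphism, $\tau$ is a face of $\ohom(G_\nQ,K_n)$ if and only if every arc of $G_\nQ$ lies in $E_\phi$, and by the proof of the principality of $U(\tau)$ the generator of $U(\tau)$ is the reduced partition of $G:=\bigcup\{G_\nQ:\tau\in\ohom(G_\nQ,K_n)\}$. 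So it suffices to understand this graph $G$ and its reduced partition.

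First I would show $E(G)=E_\phi$. The inclusion $E(G)\subseteq E_\phi$ is immediate from the previous sentence. For the reverse, given $\{a,b\}\in E_\phi$ with $a<b$, consider the partition of $[r]$ whose unique nontrivial block is $\{a,b\}$ and whose other elements are singletons: it is nonnesting, its arc diagram is the single arc $\{a,b\}$, and since $\phi(a)<\phi(b)$ this arc diagram lies in $U(\tau)$; hence $\{a,b\}\in E(G)$. Thus $G$ is the graph with edge set $E_\phi$, i.e.\ the complete multipartite graph on the interval parts $J_1,\ldots,J_t$.

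Next I would identify the reduced partition of $G$. Let $G^{\ast}$ be the arc diagram whose arcs are the consecutive pairs $\{r_p,r_p+1\}$ for $p=1,\ldots,t-1$; its connected components are runs of consecutive integers, so it is a nonnesting arc diagram, and it is a subgraph of $G$. A weakly increasing $\psi\colon[r]\to[n]$ that is strict on each arc of $G^{\ast}$ is automatically strict across any two distinct parts $J_p,J_q$ (chain the inequalities through the boundary arcs between them), so $I_{G^{\ast},K_n,ord}=I_{G,K_n,ord}$; by the uniqueness in Proposition~\ref{reducedgraph}, $G^{\ast}$ is the reduced graph of $G$. In particular $G^{\ast}$ consists only of arcs of the form $\{i,i+1\}$. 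This already gives the first assertion: if $G_\nP$ contains an arc $\{i,j\}$ with $j-i\geq 2$, it can never be the generator of any $U(\tau)$, so $\omega_0(G_\nP,n)=0$. (For $j-i>2$ this also follows from Theorem~\ref{zeroweight}, so strictly only the case of an arc $\{i,i+2\}$ needs the argument above.)

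For the second assertion, suppose $G_\nP$ has only consecutive arcs, and let $S\subseteq[r-1]$ be the set of $i$ with $\{i,i+1\}\in G_\nP$. By the previous paragraph, a vertex $\tau\leftrightarrow\phi$ has $G_\nP$ as the generator of $U(\tau)$ exactly when the boundary set $\{r_1,\ldots,r_{t-1}\}$ of the fiber decomposition of $\phi$ equals $S$; this condition fixes the intervals $J_1,\ldots,J_t$ (so $t=|S|+1$) but imposes nothing on the $t$ distinct values that $\phi$ attains. Hence $\omega_0(G_\nP,n)$ equals the number of strictly increasing value sequences of length $t$ in $[n]$, namely $\binom{n}{t}$; and since deleting the $|S|$ edges of $G_\nP$ from the full path on $[r]$ leaves $|S|+1=t$ connected components, $t=\hat c_\nP$ and $\omega_0(G_\nP,n)=\binom{n}{\hat c_\nP}$. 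I expect the identification of the reduced graph of $G$ with $G^{\ast}$ to be the step requiring the most care, as it is where the interval structure of the fibers of $\phi$ is really used; the remaining steps are bookkeeping with the definitions of $U(\tau)$ and $\omega_0$.
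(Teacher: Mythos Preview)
Your proof is correct and follows essentially the same route as the paper's: both arguments rest on the observation that for a vertex $\phi$ of $\ohom(G_e,K_n)$, the generator of $U(\phi)$ is the arc diagram whose arcs are precisely the ``jump positions'' $\{i,i+1\}$ where $\phi(i)<\phi(i+1)$, and then count the $\phi$'s with a prescribed jump set. The main difference is one of explicitness: you compute the generator directly by identifying $E(G)=E_\phi$ as the complete multipartite graph on the fibers and then reducing it to $G^{\ast}$, whereas the paper handles the first assertion more tersely by arguing that any non-consecutive arc $\{i,j\}$ allows one to insert an intermediate vertex $g$ and thereby produce a strictly smaller partition in $U(\phi)$. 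Your version has the advantage of making the uniqueness appeal to Proposition~\ref{reducedgraph} do the work cleanly, and it handles the $j=i+2$ case (not covered by Theorem~\ref{zeroweight}) without any separate case analysis.
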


\begin{proof}
Suppose that $G_\nP$ has an arc of the form $\{i,j\}$ for $i+1<j$.  
Let $g$ satisfy $i<g<j$.
Then for any ordered homomorphism $\phi$ from $G_\nP$ to $K_n$, $\phi(g)$ is either equal to $i$, equal to $j$, or equal to neither.
Thus, the reduced graph for $\phi$ given by Proposition~\ref{reducedgraph} is less than $G_\nP$ in the Dyck path poset, and hence $\phi$ has already been accounted for.

Now, suppose that $G_\nP$ only has arcs given by consecutive elements of $[r]$.
If $\phi$ is an ordered homomorphism from $G_\nP$ to $K_n$, then for $\phi$ to not have been previously counted in the weight for a graph lower than $G_\nP$ in the Dyck path poset, $\phi$ must have $\phi(i)=\phi(i+1)$ if and only if $\{i,i+1\}\notin E(G_\nP)$.
Every homomorphism of this kind arises as an ordered labeling of $[r]$ that is constant on connected components of the complement of $G_\nP$ in the path graph, and the proposition follows.
\end{proof}


\begin{remark}
In general, the $k$-th Betti numbers and the $k$-th weights for arbitrary $\nP$ appear difficult to express with a simple formula.
It would be interesting to determine whether or not these values have broader combinatorial or algebraic significance.
\end{remark}


\bibliography{Braun}
\bibliographystyle{plain}

\end{document}